\setlist[enumerate]{leftmargin=.5in}
\setlist[itemize]{leftmargin=.5in}
\crefname{hypothesis}{Hypothesis}{Hypotheses}
\title{An Optimal Control Problem with  Terminal Stochastic Linear Complementarity Constraints\thanks{Submitted to the editors DATE.
\funding{This work is supported by the Hong Kong Research Grants Council grant PolyU15300021 and the CAS AMSS-PolyU Joint Laboratory in Applied Mathematics.}}}
\author{Jianfeng Luo\thanks{CAS AMSS-PolyU Joint Laboratory of Applied Mathematics (Shenzhen), The Hong Kong Polytechnic University Shenzhen Research Institute, Shenzhen, China
  (\email{gstljf@polyu.edu.hk}).}
\and Xiaojun Chen\thanks{Department of Applied Mathematics, The Hong Kong Polytechnic University, Hong Kong, China (\email{maxjchen@polyu.edu.hk}).}}
\begin{document}

\maketitle

\begin{abstract}
 In this paper, we investigate an optimal control problem with terminal stochastic linear complementarity constraints (SLCC), and its discrete approximation using the relaxation, the sample average approximation (SAA) and the implicit Euler time-stepping scheme. We show the existence of feasible solutions and optimal solutions to the optimal control problem and its discrete approximation under the conditions that the expectation of the stochastic matrix in the SLCC is a Z-matrix or an adequate matrix. Moreover, we prove that the solution sequence  generated by the discrete approximation converges to a solution of the original optimal control problem with probability 1 as $\epsilon \downarrow  0$, $\nu\to \infty $ and $h\downarrow  0$,  where $\epsilon$ is the relaxation parameter, $\nu$ is the sample size and $h$ is the mesh size.  We also provide asymptotics of the SAA optimal value and error bounds of the time-stepping method. A numerical example is used to illustrate the existence of optimal solutions, the discretization scheme and error estimation.
\end{abstract}

\begin{keywords}
 linear complementarity problem,  sample average approximation, implicit Euler time-stepping, convergence analysis, error estimate.
\end{keywords}

\begin{MSCcodes}
49M25, 49N10, 90C15, 90C33
\end{MSCcodes}

\section{Introduction}

Let $\xi$ be a random variable defined in the probability space $(\Omega,\mathcal{F},\mathcal{P})$ with support set $\Xi:=\xi(\Omega)\subseteq\mathbb{R}^d$. Let $\|\cdot\|$ denote the Euclidean norm of a vector and a matrix.
Let $L^2(0,T)^n$ denote the Banach space of all quadratically Lebesgue integrable functions mapping from $(0,T)$ to $\mathbb{R}^n$, which is equipped with the norm
\[\|x\|_{L^2}:=\left(\int_0^T\|x(t)\|^2dt\right)^{\frac{1}{2}},\,\,\,\forall\,\,x\in L^2(0,T)^n.\]
Denote $H^1(0,T)^n$ the space of all functions $x\in L^2(0,T)^n$ whose components $x_1,\cdot\cdot\cdot,x_n:(0,T)\rightarrow\mathbb{R}$ possess weak derivatives such that the function $\dot{x}\in L^2(0,T)^n$. A suitable norm in $H^1(0,T)^n$ is defined by
\[\|x\|_{H^1}:=\left(\|x\|_{L^2}^2+\|\dot{x}\|_{L^2}^2 \right)^{\frac{1}{2}},\,\,\,\forall\,\,x\in H^1(0,T)^n.\]

In this paper, we aim to find an optimal  solution $(x,u)\in H^1(0,T)^n\times L^2(0,T)^m$ of the following optimal control problem with terminal stochastic linear complementarity constraints:
\begin{equation}\label{OCTCCS}
\begin{aligned}
& \min_{x,u}\,\,\Phi(x,u):=\mathbb{E}[F(x(T),\xi)]+\frac{1}{2}\|x-x_d\|_{L^2}^2+\frac{\delta}{2}\|u-u_d\|_{L^2}^2&\\
&\textrm{s.t.}
\left\{
\begin{aligned}
&\left.
\begin{aligned}
&\dot{x}(t)=Ax(t)+Bu(t), &\\
&Cx(t)+Du(t)-f(t)\leq 0,&\\
\end{aligned}\right\}a.e.\,\,t\in(0,T),&\\
&0\leq x(T)  \perp \mathbb{E}[M(\xi)x(T)+q(\xi)] \geq 0,&\\
&x(0)=x_0,\,\mathbb{E}[g(x(T),\xi)]\in K,
\end{aligned}
\right.
\end{aligned}
\end{equation}
where  $A\in \mathbb{R}^{n\times n}$, $B\in \mathbb{R}^{n\times m}$, $C\in \mathbb{R}^{l\times n}$, $D\in \mathbb{R}^{l\times m}$, $x_0\in\mathbb{R}^{n}$ and $f\in L^2(0,T)^l$ are given, $\delta>0$ is a scalar, $K\subseteq\mathbb{R}^\ell$ is a nonempty, closed and convex set. Here $x_d\in L^2(0,T)^n$ and $u_d\in L^2(0,T)^m$ are the given desired state and control, respectively. The mappings $F: \mathbb{R}^n\times \Xi\rightarrow \mathbb{R}$ and $g:\mathbb{R}^n\times \Xi\rightarrow \mathbb{R}^\ell$ are continuously differentiable over $\mathbb{R}^n$ for almost every $\xi\in\Xi$, and are measurable on $\Xi$ for each $z\in\mathbb{R}^n$. Two mappings $M: \Xi\rightarrow \mathbb{R}^{n\times n}$ and $q: \Xi\rightarrow \mathbb{R}^n$ are also measurable in $\xi\in\Xi$.

In \cite{FP2018}, Benita and Mehlita   studied an optimal control problem with terminal deterministic complementarity
constraints, which has many interesting practical applications in multi-agent control networks. They derived some stationarity conditions and presented constraint qualifications which ensure that these conditions hold at a local optimal solution of the optimal control problem under the assumption that the feasible set is nonempty.
However, sufficient conditions were not given for the existence of $x(T)$ such that the terminal deterministic complementarity constraints
\begin{eqnarray}
& 0\leq x(T)  \perp \bar{M}x(T)+\bar{q} \geq 0, \label{LCP}\\
&\bar{g}(x(T)) \in K,\label{LCPg}
\end{eqnarray}
hold, where $\bar{M}\in \mathbb{R}^{n\times n}$, $\bar{q}\in \mathbb{R}^{n}$ and $\bar{g}:\mathbb{R}^n\rightarrow \mathbb{R}^\ell$.
  Problem (\ref{OCTCCS}) extends the problem in \cite{FP2018} to terminal stochastic case in uncertain  environment.

Optimal control with  differential equations  and  complementarity constraints
provides a powerful modeling paradigm for many practical problems such as the optimal control of electrical networks with diodes and/or MOS transistors \cite{BB2003} and dynamic optimization of chemical processes \cite{AM2004}. It can also be derived from the KKT conditions of a bilevel optimal control if the lower level problem is convex and satisfies a constraint qualification  \cite{PG2020}. A series of works \cite{MP2013,ChenSICON,CD2019,GY2016,VB2020} are devoted to the study of optimal control problems with complementarity constraints.
It should be noted that these papers focus on deterministic problems, where the system coefficients including system parameters and boundary/initial conditions are perfectly known. On the other hand,
optimal control problems with stochastic differential equation constraints  under uncertain environment have been extensively studied \cite{PL2020,CP2014,CP2016}. These papers investigate theory and algorithms for optimal control when the parameters in the differential equations have noise and uncertainties. However,  there is very little research on  optimal control with terminal stochastic complementarity constraints.

The main contributions of this paper are summarized as follows.
We show the existence of feasible solutions  to the optimal control problem \eqref{OCTCCS} under the conditions that $\mathbb{E}[M(\xi)]$ is a Z-matrix or an adequate matrix, which gives reasonable conditions for  the existence of $x(T)$ such that (\ref{LCP})-(\ref{LCPg}) hold.
Moreover, we prove the  existence of feasible solutions and optimal solutions to the
 discrete approximation using the relaxation, the sample average approximation (SAA) and the implicit Euler time-stepping scheme under the same conditions.   In the convergence analysis, we prove that the solution sequence  generated by the discrete approximation converges to a solution of the original optimal control problem  with probability 1 (w.p.1) as $\epsilon\downarrow  0$, $\nu\to \infty $ and $h\downarrow  0$,  where $\epsilon$ is the relaxation parameter,  $\nu$ is the sample size and $h$ is the mesh size.  We also provide asymptotics of the SAA optimal value and error bounds of the time-stepping method. These results extend the approximation error of the Euler time-stepping method of an optimal control problem with convex terminal constraints to nonconvex terminal stochastic complementarity constraints.

The paper is organised as follows: Section \ref{se:Existence} deals with the existence of feasible solutions of problem \eqref{OCTCCS}. Section \ref{se:SAA} studies the existence of feasible solutions of the relaxation and  the SAA of \eqref{OCTCCS} and the convergence to the original problem \eqref{OCTCCS} as the relaxation parameter goes to zero and the sample size approaches to infinity. In Section \ref{se:time}, we study the convergence of the time-stepping scheme and show the convergence properties of the discrete method using the SAA and the implicit Euler time-stepping scheme. A numerical example is given in Section \ref{Example} to illustrate the theoretical results obtained in this paper.     Final conclusion remarks are presented in Section \ref{se:conclusions}.

\subsection{Notation and assumptions}

Throughout this paper we use the following notation. For a matrix $A\in \mathbb{R}^{m\times n}$, $A_{i\bullet}$ and $A_{\bullet j}$ denote its $i$th row and $j$th column, respectively,  $A^\top$ denotes its transpose matrix, and $A^\dag$ is its pseudoinverse matrix. If $A$ possesses full row rank $m$, we have $A^\dag=A^\top (AA^\top)^{-1}$.
Let $I$ denote the identity matrix with a certain dimension.

For sets ${S}_1,S_2\subseteq \mathbb{R}^n$, we denote the distance from $v\in \mathbb{R}^n$ to $S_1$ and the deviation of the set $S_1$ from the set $S_2$ by
$\textrm{dist}(v,S_1)=\inf_{v'\in S_1}\|v-v'\|,$
 and $\mathbb{D}(S_1,S_2)=\sup_{v\in S_1}\textrm{dist}(v,S_2)$, respectively. For sets ${S}_1,S_2\subseteq H^1(0,T)^n \times L^2(0,T)^m$, we denote the distance from $(v_1,v_2)\in H^1(0,T)^n \times L^2(0,T)^m$ to $S_1$ by
$\textrm{dist}((v_1,v_2),S_1)=\inf_{(v'_1,v'_2)\in S_1}(\|v_1-v'_1\|_{H^1}+\|v_2-v'_2\|_{L^2}),$
and the deviation of the set $S_1$ from the set $S_2$ by $\mathbb{D}(S_1,S_2)=\sup_{(v_1,v_2)\in S_1}\textrm{dist}((v_1,v_2),S_2)$. Let $\mathcal{B}(v,\varepsilon)=\{w: \|w-v\|\le\varepsilon\}$ be the closed ball centered at $v$ with the radius of $\varepsilon$.
Let int$S$ denote the interior of a set $S$. Let $[N]=\{1,2,\ldots, N\}.$

We give two basic assumptions  to ensure that problem \eqref{OCTCCS} is well defined.

\begin{assumption}\label{assumption3}
 There exist four nonnegative measurable functions $\kappa_i(\xi)$ with $\mathbb{E}[\kappa_i(\xi)]<\infty$ $(i=1,2,3,4)$ such that
 for any $z_1$, $z_2\in \mathbb{R}^n$,
\[|F(z_1,\xi)-F(z_2,\xi)|\leq \kappa_1(\xi)\|z_1-z_2\|,\,\,\|g(z_1,\xi)\|\leq \kappa_2(\xi)\|z_1\|,\,\,a.e.\,\,\xi\in\Xi,\]
and
  \[\|M(\xi)\|\leq \kappa_3(\xi)\,\,\,\textrm{and}\,\,\,\|q(\xi)\|\leq \kappa_4(\xi),\,\,\,\forall\,\, \xi\in\Xi.\]
\end{assumption}


\begin{assumption}\label{assumption-Y}
The matrix $D\in \mathbb{R}^{l\times m}$ is full row rank with $l<m$ and the matrix
\[\mathcal{R}:=[BY\,\,\,\, (A-BD^\dag C)BY\,\,\,(A-BD^\dag C)^2BY\,\,\, \cdot\cdot\cdot\,\, \,(A-BD^\dag C)^{n-1}BY]\in \mathbb{R}^{n\times n(m-l)}\]
is also full row rank, where $Y\in\mathbb{R}^{m\times (m-l)}$ is a matrix with full column rank $m-l$ such that $DY=0$.
\end{assumption}

\section{Existence of optimal solutions of problem \eqref{OCTCCS}}\label{se:Existence}

 In this section, we first investigate the feasibility of problem \eqref{OCTCCS}.
We call $(x,u)\in H^1(0,T)^n\times L^2(0,T)^m$ a feasible solution  of  \eqref{OCTCCS} if it satisfies the constraints in  \eqref{OCTCCS}.

For an index set $J \subseteq\{1,2,\cdot\cdot\cdot,n\}$, let $|J|$ denote its cardinality and $J^c$ denote its complementarity set.
We denote by $q_J \in \mathbb{R}^{|J|}$ the subvector formed from a vector
$q\in \mathbb{R}^n$ by picking the entries indexed by $J$ and denote by $M_{J_1,J_2}\in \mathbb{R}^{|J_1|\times |J_2|}$ the submatrix formed from a matrix $M\in \mathbb{R}^{n\times n}$ by picking the rows indexed
by $J_1$ and columns indexed by $J_2$.
  Let
${\cal J}=\{J \, :\, \mathbb{E}[M_{J,J}(\xi)] $ is nonsingular$\}$ and
$$\beta=\left\{\begin{array}{ll}
1 & {\rm if}  \,  {\cal J} =\emptyset,\\
\max\{\|(\mathbb{E}[M_{J,J}(\xi)])^{-1}\|_1\,\, | \,\,   J\in {\cal J} \} & {\rm otherwise.}
\end{array}
\right.
$$

A square matrix is said to be a P-matrix if all its principal minors are positive. A square matrix is said to be a Z-matrix if its off-diagonal entries are non-positive.
A matrix $\mathbb{E}[M(\xi)]\in \mathbb{R}^{n\times n} $ is called column adequate if for each $z\in \mathbb{R}^n$, $z_i(\mathbb{E}[M(\xi)]z)_i\leq 0$ for all $i=1,2,\cdot\cdot\cdot,n$ implies $\mathbb{E}[M(\xi)]z=0$. The matrix $\mathbb{E}[M(\xi)]$ is row adequate if $\mathbb{E}[M(\xi)]^\top $ is column adequate and it is adequate if it is both column and row adequate \cite{LCP1992}. It is known that a P-matrix is  adequate and a symmetric positive semi-definite matrix is also adequate \cite[Theorem 3.1.7, Theorem 3.4.4]{LCP1992}. However, an adequate matrix may neither be a P-matrix nor a positive semi-definite matrix \cite{LCP1992}.

\begin{theorem}\label{feasible-set}
Let Assumption \ref{assumption3} and Assumption \ref{assumption-Y} hold. Suppose the following three conditions hold: (i) $\mathcal{B}(0, \beta\mathbb{E}[\kappa_2(\xi)]\|\mathbb{E}[q(\xi)]\|_1) \subseteq K$,
(ii) the set
\[\mathcal{V}:=\{v\in \mathbb{R}^n\,|\, \mathbb{E}[M(\xi)v+q(\xi)]\geq 0,\,v\geq0\}\] is nonempty, and (iii) $\mathbb{E}[M(\xi)]$ is an adequate matrix or a Z-matrix.
 Then problem \eqref{OCTCCS} has a feasible solution $(x,u)\in H^1(0,T)^n\times L^2(0,T)^m$. Moreover, the optimal control problem \eqref{OCTCCS} admits an optimal solution if $\mathbb{E}[F(\cdot,\xi)]$ is bounded from below.
\end{theorem}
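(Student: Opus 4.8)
The plan is to split the feasibility claim into two independent tasks and handle optimality by the direct method.

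\emph{First task: a good terminal state.} I would first produce a vector $z^\star\in\mathbb R^n$ satisfying the terminal requirements $0\le z^\star\perp\mathbb E[M(\xi)]z^\star+\mathbb E[q(\xi)]\ge0$ and $\mathbb E[g(z^\star,\xi)]\in K$; note $\bar M:=\mathbb E[M(\xi)]$ and $\bar q:=\mathbb E[q(\xi)]$ are well defined by Assumption \ref{assumption3}. Condition (ii) says the linear complementarity problem $\mathrm{LCP}(\bar q,\bar M)$ is feasible, and condition (iii) that $\bar M$ is a Z-matrix or an adequate matrix. For a Z-matrix the feasible set $\mathcal V$ has a least element, which is known to solve the LCP; for an adequate matrix, adequacy implies the class-$Q_0$ property, so feasibility again forces solvability. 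The crucial quantitative step is to select a solution $z^\star$ whose support $J=\{i:z^\star_i>0\}$ makes $\bar M_{J,J}$ nonsingular (the least-element solution in the Z-matrix case, an appropriate basic solution of the solution set in the adequate case). Then $z^\star_{J^c}=0$ and $\bar M_{J,J}z^\star_J=-\bar q_J$, so $\|z^\star\|_1=\|(\bar M_{J,J})^{-1}\bar q_J\|_1\le\|(\bar M_{J,J})^{-1}\|_1\,\|\bar q\|_1\le\beta\,\|\mathbb E[q(\xi)]\|_1$ by the definition of $\beta$. Combining with Assumption \ref{assumption3} gives $\|\mathbb E[g(z^\star,\xi)]\|\le\mathbb E[\kappa_2(\xi)]\,\|z^\star\|\le\beta\,\mathbb E[\kappa_2(\xi)]\,\|\mathbb E[q(\xi)]\|_1$, so $\mathbb E[g(z^\star,\xi)]$ lies in the ball in condition (i), hence in $K$.

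\emph{Second task: realise $z^\star$ as a terminal value.} Since $D$ has full row rank, $DD^\dag=I$; setting $u(t)=-D^\dag Cx(t)+D^\dag f(t)+Yw(t)$ with $w\in L^2(0,T)^{m-l}$ arbitrary makes $Cx(t)+Du(t)=f(t)$, so the mixed constraint holds (with equality), while the dynamics become $\dot x(t)=(A-BD^\dag C)x(t)+BYw(t)+BD^\dag f(t)$. Assumption \ref{assumption-Y} is exactly the controllability (Kalman rank) condition for the pair $(A-BD^\dag C,\,BY)$, and the fixed affine term $BD^\dag f$ does not affect controllability, so there is a control $w\in L^2(0,T)^{m-l}$ steering $x(0)=x_0$ to $x(T)=z^\star$. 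The corresponding state solves a linear ODE with $L^2$ right-hand side, so $x\in H^1(0,T)^n$ and $u\in L^2(0,T)^m$; together with the first task this gives a feasible $(x,u)$ for \eqref{OCTCCS}.

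\emph{Existence of a minimizer.} Assume $\mathbb E[F(\cdot,\xi)]$ is bounded below; since the other two terms of $\Phi$ are nonnegative, $\Phi$ is bounded below on the (now nonempty) feasible set, so there is a minimizing sequence $(x^k,u^k)$. The penalty $\frac{\delta}{2}\|u^k-u_d\|_{L^2}^2$ bounds $\{u^k\}$ in $L^2$, $\frac12\|x^k-x_d\|_{L^2}^2$ bounds $\{x^k\}$ in $L^2$, and then $\dot x^k=Ax^k+Bu^k$ bounds $\{\dot x^k\}$ in $L^2$, so $\{x^k\}$ is bounded in $H^1$. Passing to a subsequence, $x^k\rightharpoonup x^\star$ in $H^1$ and $u^k\rightharpoonup u^\star$ in $L^2$, and by the compact embedding $H^1(0,T)^n\hookrightarrow C([0,T])^n$ also $x^k\to x^\star$ uniformly, so $x^k(0)\to x_0$ and $x^k(T)\to x^\star(T)$. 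I would then pass to the limit in each constraint: $\{(x,u):\dot x=Ax+Bu\}$ is a closed subspace, hence weakly closed; $\{(x,u):Cx(t)+Du(t)\le f(t)\ \text{a.e.}\}$ is convex and closed in $L^2\times L^2$, hence weakly closed; and the terminal conditions $x(T)\ge0$, $\bar Mx(T)+\bar q\ge0$, $x(T)^\top(\bar Mx(T)+\bar q)=0$ and $\mathbb E[g(x(T),\xi)]\in K$ survive passage to the pointwise limit $x^\star(T)$, using Assumption \ref{assumption3} and dominated convergence for continuity of $z\mapsto\mathbb E[g(z,\xi)]$ and closedness of $K$. Hence $(x^\star,u^\star)$ is feasible. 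Finally $\mathbb E[F(x^k(T),\xi)]\to\mathbb E[F(x^\star(T),\xi)]$ by the Lipschitz bound in Assumption \ref{assumption3}, and $\|\cdot-x_d\|_{L^2}^2$, $\|\cdot-u_d\|_{L^2}^2$ are weakly lower semicontinuous, so $\Phi(x^\star,u^\star)\le\liminf_k\Phi(x^k,u^k)=\inf\Phi$; thus $(x^\star,u^\star)$ is optimal.

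The main obstacle is the quantitative step in the first task: extracting from the possibly degenerate (and possibly unbounded) LCP solution set a solution whose $\ell_1$-norm is controlled by $\beta\|\mathbb E[q(\xi)]\|_1$. This is exactly where the least-element theory for Z-matrices and the basic/extreme-point structure of the solution set for adequate matrices are indispensable, and where condition (i) is calibrated. The controllability construction in the second task and the weak-compactness/lower-semicontinuity arguments in the third are then routine, the only mild care being the domination bounds of Assumption \ref{assumption3} used to pass expectations through limits.
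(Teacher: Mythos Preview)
Your proposal is correct and follows the same three-part architecture as the paper: manufacture a terminal vector satisfying the constrained LCP, realise it as $x(T)$ via controllability of the reduced pair $(A-BD^\dag C,\,BY)$, and obtain optimality by the direct method. The one substantive difference is in how the ``small'' LCP solution is selected. You propose the least element (Z-matrix case) or a basic solution of the solution polyhedron (adequate case) and then assert that $\bar M_{J,J}$ is nonsingular on its support. The paper instead takes a \emph{sparse} solution $x^*(T)\in\arg\min\{\|z\|_0:0\le z\perp\bar Mz+\bar q\ge0\}$ and invokes results of Chen--Xiang to conclude that either $x^*(T)=0$ (when $\mathcal J=\emptyset$) or $\bar M_{J,J}$ is nonsingular, giving the representation $x^*(T)=-(I-\Lambda+\Lambda\bar M)^{-1}\Lambda\bar q$ and hence $\|x^*(T)\|_1\le\beta\|\bar q\|_1$ uniformly for both matrix classes. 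Your route reaches the same bound, but the nonsingularity claim for the least element is not automatic and would need its own justification; the sparse-solution device handles the Z-matrix and adequate cases simultaneously and makes the degenerate case $\mathcal J=\emptyset$ transparent. For optimality the paper simply defers to the direct-method argument in Benita--Mehlitz, which is precisely the weak-compactness/lower-semicontinuity argument you spell out.
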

\begin{proof}

Following Lemma 7.2 in \cite{FP2018}, Assumption \ref{assumption-Y} implies that the following system
\begin{equation}\label{ODE-4}
\left\{
\begin{aligned}
& \dot{x}(t)=Ax(t)+Bu(t),&\\
& Cx(t)+Du(t)=p(t),&\\
& x(0)=x_0, \,\,\,x(T)=b,&\\
\end{aligned}
\right.\,\,a.e.\,\,t\in(0,T),
\end{equation}
admits a solution $(x,u)\in H^1(0,T)^n\times L^2(0,T)^m$ for any $p\in L^2(0,T)^l$ and $b\in \mathbb{R}^n$.
If we set $p(t)=f(t)+\tilde{p}(t)$ in \eqref{ODE-4} for arbitrary $\tilde{p}\in L^2(0,T)^l$ with $\tilde{p}(t)\leq 0$ and $f(t)$ in \eqref{OCTCCS}, then  the following problem
\begin{equation}\label{ODE-inequality}
\left\{
\begin{aligned}
& \dot{x}(t)=Ax(t)+Bu(t),&\\
& Cx(t)+Du(t)-f(t)\leq 0,&\\
& x(0)=x_0, \,\,\,x(T)=b,&\\
\end{aligned}
\right.\,\,a.e.\,\,t\in(0,T),
\end{equation}
has a solution $(x,u)\in H^1(0,T)^n\times L^2(0,T)^m$ for any $b\in\mathbb{R}^n$.

Now we show the solution set of the following stochastic constrained LCP is nonempty,
\begin{equation}\label{mixedCP}
\left\{
\begin{aligned}
& \min \{x(T), \, \mathbb{E}[M(\xi)x(T)+q(\xi)]\}=0,&\\
&\mathbb{E}[g(x(T),\xi)]\in K.
\end{aligned}
\right.
\end{equation}

Following Corollary 3.3.5 and  Theorem 3.11.6 in \cite{LCP1992}, the LCP in \eqref{mixedCP} has a solution from the assumption that the set $\mathcal{V}$ is nonempty and $\mathbb{E}[M(\xi)]$ is adequate or a
Z-matrix. Let $x^*(T)$ be a sparse solution of
the LCP in (\ref{mixedCP}), which is defined as
$$\begin{aligned}
x^*(T) \in & \quad {\rm argmin}\,  \|x(T)\|_0\\
& \quad {\rm s.t. } \quad \quad  0\leq x(T)  \perp \mathbb{E}[M(\xi)x(T)+q(\xi)] \geq 0,
\end{aligned}
$$
where $\|x(T)\|_0=$ the number of nonzero components of $x(T)$.
If there is no $J$ such that $\mathbb{E}[M_{J,J}(\xi)]$ is nonsingular, that is, ${\cal J}=\emptyset$,  then by Theorem 2.2 in \cite{ChenXiang} and Theorem 3.1 in \cite{chen-xiang-2016}, $\|x^*(T)\|_0=\|x^*(T)\|_1=0.$ Hence, we have
\begin{equation}\label{xT}
\|x^*(T)\|\le \beta\|\mathbb{E}[q(\xi)]\|_1.
\end{equation}
If there is $J$ such that $x^*(T)_J>0$ and $x^*(T)_{J^c}=0$,
from Theorem 2.2 in \cite{ChenXiang} and Theorem 3.1 in \cite{chen-xiang-2016}, we know that $\mathbb{E}[M_{J,J}(\xi)]$ is nonsingular and
\[x^*(T)=-(I-\Lambda+\Lambda\mathbb{E}[M(\xi)])^{-1}\Lambda\mathbb{E}[q(\xi)],\]
where $\Lambda$ is a diagonal matrix with $\Lambda_{i,i}=1,$ if $i\in J$ and $\Lambda_{i,i}=0,$ if $i\in J^c$.
Moreover,   from
$$\|(I-\Lambda+\Lambda\mathbb{E}[M(\xi)])^{-1}\Lambda\|
\le \max\{\|(\mathbb{E}[M_{J,J}(\xi)])^{-1}\|_1 \, |\,   J\in {\cal J} \},$$
we obtain (\ref{xT}) for ${\cal J}\neq \emptyset$.

Therefore, from Assumption \ref{assumption3} and assumption (i) of this theorem,  we have
\begin{equation*}
\begin{aligned}
&\|\mathbb{E}[g(x^*(T),\xi)]\|\leq \mathbb{E}[\kappa_2(\xi)]\|x^*(T)\|\leq\mathbb{E}[\kappa_2(\xi)] \|x^*(T)\|_1\le \beta\mathbb{E}[\kappa_2(\xi)]\|\mathbb{E}[q(\xi)]\|_1,
\end{aligned}
\end{equation*}
which implies that $\mathbb{E}[g(x^*(T),\xi)]\in  K$.  Hence the solution set of \eqref{mixedCP} is nonempty.

Similar to the proof of Theorem 5.1 in \cite{FP2018}, we can derive the existence of optimal solutions to problem \eqref{OCTCCS} under the assumption that $\mathbb{E}[F(\cdot,\xi)]$ is bounded from below.
\end{proof}

\begin{remark}\label{RE-1}
The constrained LCP \eqref{mixedCP} may have  multiple solutions or may not have a solution. If $\mathbb{E}[M(\xi)]$ is a P-matrix, then for any $\mathbb{E}[q(\xi)]$, the LCP in \eqref{mixedCP} has a unique solution $x(T)$. In such case, if $\mathbb{E}[g(x(T),\xi)]\in K$, then \eqref{mixedCP} has a unique solution, otherwise \eqref{mixedCP} does not have a solution. If $\mathbb{E}[M(\xi)]$ is a Z-matrix or an adequate matrix, the LCP in \eqref{mixedCP} may have multiple solutions, while some solutions can be bounded by $\beta \|\mathbb{E}[q(\xi)]\|_1$. When $\mathcal{B}(0, \beta\mathbb{E}[\kappa_2(\xi)]\|\mathbb{E}[q(\xi)]\|_1)$ $\subseteq K$, some solutions of the LCP satisfy $\mathbb{E}[g(x(T),\xi)]\in K$
and thus the constrained LCP \eqref{mixedCP} is solvable. See the example
in Section \ref{Example}.
\end{remark}

\begin{remark}\label{RE-2}
Assumption \ref{assumption-Y} is also used in \cite{FP2018} for  the case $l<m$, which allows more freedom for the system controls. If $l=m$ and $D$ is invertible, we can  write $Cx(t)+Du(t)-f(t)=-v(t)$ with $v(t)\geq 0$ for a.e. $t\in[0,T]$, where $v\in L^2(0,T)^l$. Then the solvability of \eqref{ODE-inequality} becomes to find a solution pair $(x,v)\in H^1(0,T)^n \times L^2(0,T)^l$ with $v(t)\geq 0$ satisfying
\begin{equation}\label{ODE-positive-control-new}
\left\{
\begin{aligned}
& \dot{x}(t)=(A-BD^{-1} C)x(t)+BD^{-1} f(t) -BD^{-1} v(t),&\\
& x(0)=x_0, \,\,\,x(T)=b,&\\
\end{aligned}
\right.\,\,a.e.\,\,t\in(0,T).
\end{equation}
It then requires the concept of positive controllability \cite{Brammer-1972,hiroshi-tetsuro-2007}.
 Therefore, the conditions to ensure the solution set of \eqref{ODE-positive-control-new} is nonempty for any $b\in \mathbb{R}^n$ are
(i) the matrix
\[[BD^{-1}\,\,\,\, (A-BD^{-1} C)BD^{-1}\,\,\,(A-BD^{-1} C)^2BD^{-1}\,\,\, \cdot\cdot\cdot\,\, \,(A-BD^{-1} C)^{n-1}BD^{-1}]\]
possesses full row rank $n$, and (ii) there is no real eigenvector $\textbf{w}\in \mathbb{R}^n$ of $(A-BD^{-1} C)^\top$ such that $\textbf{w}^\top BD^{-1}\textbf{v} \geq 0$ for any $\textbf{v}\in \mathbb{R}^m_+$. Then there is a finite time $T_0$ such that the solution set of \eqref{ODE-inequality} is nonempty for any $b\in\mathbb{R}^n$ and $T\geq T_0$.  Hence we can replace Assumption \ref{assumption-Y} in Theorem \ref{feasible-set} by these two conditions for the case  that $l=m$ and $D$ is invertible.
\end{remark}

\section{Relaxation and sample average approximation (SAA) }\label{se:SAA}

In this section, we apply the relaxation and the SAA approach  to solve \eqref{OCTCCS}.
We consider an independent identically distributed (i.i.d) sample of $\xi(\omega)$, which is
denoted by $\{\xi_1,\cdot\cdot\cdot,\xi_\nu\}$, and use the following relaxation and SAA problem to
approximate problem \eqref{OCTCCS}:
\begin{equation}\label{OCTCCS-sample}
\begin{aligned}
& \min_{x,u}\,\,\Phi^{\nu}(x,u):=\frac{1}{\nu}\sum_{\ell=1}^\nu  F(x(T),\xi_\ell)+\frac{1}{2}\|x-x_d\|_{L^2}^2+\frac{\delta}{2}\|u-u_d\|_{L^2}^2&\\
&\textrm{s.t.}
\left\{
\begin{aligned}
&\left.
\begin{aligned}
&\dot{x}(t)=A x(t)+Bu(t), &\\
& C x(t)+Du(t)-f(t)\leq 0,
\end{aligned}\right\}a.e.\,\,t\in(0,T),&\\
&\left\|\min \left\{x(T), \frac{1}{\nu}\sum_{\ell=1}^\nu[M(\xi_\ell)x(T)+q(\xi_\ell)]\right\}\right\|\le \epsilon, &\\
&x(0)=x_0,\,\,\, \frac{1}{\nu}\sum_{\ell=1}^\nu g(x(T),\xi_\ell)\in
 K^\epsilon :=\left\{ z \, | \, {\rm dist}(z,K) \le \epsilon\right\},
\end{aligned}
\right.
\end{aligned}
\end{equation}
where $\epsilon>0$ is a sufficiently small number.

\subsection{Convergence of the relaxation and SAA}
In this subsection, we show the existence of a solution of problem
(\ref{OCTCCS-sample}), and its convergence as $\epsilon \downarrow 0$ and $\nu \to \infty$.

\begin{theorem}\label{feasible-set-1}
Suppose that the conditions in Theorem \ref{feasible-set} hold.
Then for any $\epsilon>0$, the SAA problem \eqref{OCTCCS-sample} has an optimal solution $(x^{\epsilon,\nu},u^{\epsilon,\nu})\in H^1(0,T)^n\times L^2(0,T)^m$ for sufficiently large $\nu$ w.p.1.
\end{theorem}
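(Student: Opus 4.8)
The plan is to mimic the two-part structure of Theorem \ref{feasible-set}: first exhibit a feasible point of the SAA problem \eqref{OCTCCS-sample} that survives for large $\nu$ w.p.1, then invoke a compactness/lower-semicontinuity argument (as in the proof of Theorem 5.1 in \cite{FP2018}) to upgrade feasibility to the existence of a minimizer. The key observation is that \eqref{OCTCCS-sample} relaxes the terminal complementarity constraint to an $\epsilon$-tolerance and enlarges $K$ to $K^\epsilon$, so any feasible point of \eqref{OCTCCS} is ``almost'' feasible for \eqref{OCTCCS-sample}, and a uniform law of large numbers closes the remaining gap.

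First I would take the feasible triple $(x^*(T), x, u)$ constructed in the proof of Theorem \ref{feasible-set}, where $x^*(T)$ is the sparse solution of the constrained LCP \eqref{mixedCP} satisfying the bound \eqref{xT}, namely $\|x^*(T)\|\le \beta\|\mathbb{E}[q(\xi)]\|_1$, and $(x,u)\in H^1(0,T)^n\times L^2(0,T)^m$ solves \eqref{ODE-inequality} with $b=x^*(T)$ (such a pair exists by Assumption \ref{assumption-Y} and Lemma 7.2 in \cite{FP2018}). This pair already satisfies the ODE, the mixed inequality constraint, and $x(0)=x_0$ exactly, so only the last two (relaxed) constraints need checking. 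For the relaxed complementarity constraint, note that $\min\{x^*(T), \mathbb{E}[M(\xi)x^*(T)+q(\xi)]\}=0$, so
\[
\left\|\min\left\{x^*(T), \tfrac1\nu\sum_{\ell=1}^\nu[M(\xi_\ell)x^*(T)+q(\xi_\ell)]\right\}\right\|
\le \left\|\tfrac1\nu\sum_{\ell=1}^\nu[M(\xi_\ell)x^*(T)+q(\xi_\ell)] - \mathbb{E}[M(\xi)x^*(T)+q(\xi)]\right\|,
\]
using that $v\mapsto\min\{a,v\}$ is nonexpansive componentwise. Since $x^*(T)$ is a fixed vector and $\mathbb{E}[\kappa_3(\xi)],\mathbb{E}[\kappa_4(\xi)]<\infty$ by Assumption \ref{assumption3}, the classical strong law of large numbers gives that the right-hand side is $\le\epsilon/2$ for all $\nu$ large enough, w.p.1. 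Similarly, $\tfrac1\nu\sum_{\ell=1}^\nu g(x^*(T),\xi_\ell)\to\mathbb{E}[g(x^*(T),\xi)]\in K$ w.p.1 by the SLLN (the integrability being guaranteed by $\|g(x^*(T),\xi)\|\le\kappa_2(\xi)\|x^*(T)\|$ with $\mathbb{E}[\kappa_2(\xi)]<\infty$), hence $\mathrm{dist}(\tfrac1\nu\sum_\ell g(x^*(T),\xi_\ell),K)\le\epsilon$ for $\nu$ large, so the enlarged constraint $\in K^\epsilon$ holds. Thus \eqref{OCTCCS-sample} is feasible for all sufficiently large $\nu$, w.p.1.

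For the second part I would argue existence of a minimizer exactly as in Theorem 5.1 of \cite{FP2018}: along a minimizing sequence $(x^k,u^k)$ for $\Phi^\nu$, the coercive terms $\tfrac12\|x-x_d\|_{L^2}^2+\tfrac\delta2\|u-u_d\|_{L^2}^2$ together with the linear ODE (which controls $\|\dot x\|_{L^2}$ in terms of $\|x\|_{L^2}$ and $\|u\|_{L^2}$) yield a bounded sequence in $H^1(0,T)^n\times L^2(0,T)^m$; passing to a weakly convergent subsequence, the ODE constraint and the pointwise inequality $Cx+Du-f\le0$ pass to the limit by weak closedness of affine constraints, the compact embedding $H^1(0,T)^n\hookrightarrow C([0,T])^n$ makes $x^k(T)\to x(T)$ so the (closed) relaxed terminal constraints and $x(0)=x_0$ persist, and $\Phi^\nu$ is weakly lower semicontinuous (the first sum is continuous in $x(T)$, the remaining terms are convex and continuous). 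The main obstacle is making the terminal-constraint closure rigorous — specifically ensuring the map $(x,u)\mapsto x(T)$ is well behaved on the weakly convergent subsequence and that the finite-sample mappings $z\mapsto\min\{z,\tfrac1\nu\sum_\ell[M(\xi_\ell)z+q(\xi_\ell)]\}$ and $z\mapsto\tfrac1\nu\sum_\ell g(z,\xi_\ell)$ are continuous (the former is piecewise linear hence continuous, the latter continuous since $g(\cdot,\xi_\ell)$ is $C^1$), so that the preimages of the closed sets $\mathcal{B}(0,\epsilon)$ and $K^\epsilon$ are closed and contain the limit point; this is routine but is where all the care goes.
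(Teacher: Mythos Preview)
Your proposal is correct and follows essentially the same route as the paper: fix the sparse solution $x^*(T)$ from Theorem~\ref{feasible-set}, use the strong law of large numbers to push it into the relaxed SAA terminal set \eqref{mixedCP-SAA} for large $\nu$, couple it with the controllability argument from Assumption~\ref{assumption-Y} to get a feasible pair, and then appeal to the weak-compactness/lower-semicontinuity machinery of Theorem~5.1 in \cite{FP2018} for existence of a minimizer. The paper's own proof is terser (it simply invokes the LLN and cites \cite{FP2018} without unpacking the nonexpansiveness of $\min$ or the compact embedding $H^1\hookrightarrow C$), but the substance is identical; the only ingredient you leave implicit that the paper states is that $\tfrac{1}{\nu}\sum_\ell F(\cdot,\xi_\ell)$ is bounded below for large $\nu$, which is what keeps the minimizing sequence from escaping---this follows from the Lipschitz bound in Assumption~\ref{assumption3} (linear lower growth in $\|z\|$, dominated by the quadratic tracking terms), so your coercivity claim is justified.
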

\begin{proof}
Since the solution set of the linear control system \eqref{ODE-inequality} is nonempty for any $b\in \mathbb{R}^n$, for the existence of a feasible solution to the SAA problem \eqref{OCTCCS-sample}, it suffices to show the solution set of the following system
\begin{equation}\label{mixedCP-SAA}
\left\{
\begin{aligned}
&\left\|\min \left\{x(T), \frac{1}{\nu}\sum_{\ell=1}^\nu[M(\xi_\ell)x(T)+q(\xi_\ell)]\right\}\right\|\le \epsilon, &\\
& \frac{1}{\nu}\sum_{\ell=1}^\nu g(x(T),\xi_\ell)\in
 K^\epsilon
\end{aligned}
\right.
\end{equation}
is nonempty w.p.1 for any positive number $\epsilon$.

Let $x^*(T)$ be a sparse solution of
the LCP in (\ref{mixedCP}).
From Theorem \ref{feasible-set}, we know that $x^*(T)$ satisfies (\ref{mixedCP}). By the Law of Large Number,
for sufficiently lager $\nu$,  $x^*(T)$ is a solution of (\ref{mixedCP-SAA}).
It concludes that the solution set of the system \eqref{mixedCP-SAA} is nonempty w.p.1 for sufficiently  large $\nu$.

Since $\mathbb{E}[ F(\cdot,\xi)]$ is bounded from below, we can also obtain that $\frac{1}{\nu}\sum_{\ell=1}^\nu F(\cdot,\xi_\ell)$ is bounded from below with sufficiently large $\nu$. The existence of optimal solutions to problem \eqref{OCTCCS-sample} is  similar to the proof of Theorem \ref{feasible-set}.
\end{proof}

Since $F(\cdot,\xi)$ is a smoothing function for a.e. $\xi\in\Xi$,
following the proof of Lemma 3.5 in \cite{Chen-Wets-Zhang}, we can have the following lemma.
\begin{lemma}\label{epi-con-objective}
Let $\mathcal{C}_1\times \mathcal{C}_2$ denote a compact subset of $H^1(0,T)^n\times L^2(0,T)^m$. It holds that $\Phi^{\nu}$ epiconverges to $\Phi$ w.p.1
over $\mathcal{C}_1\times \mathcal{C}_2$ as $\nu\rightarrow\infty$.
\end{lemma}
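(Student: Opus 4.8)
The plan is to invoke the general epiconvergence result for sample average approximations of integral functionals, which is Lemma 3.5 in \cite{Chen-Wets-Zhang}, and to verify that the objective $\Phi$ and its SAA counterpart $\Phi^\nu$ fit into that framework on the compact set $\mathcal{C}_1\times\mathcal{C}_2$. Recall that $\Phi(x,u)=\mathbb{E}[F(x(T),\xi)]+\tfrac12\|x-x_d\|_{L^2}^2+\tfrac{\delta}{2}\|u-u_d\|_{L^2}^2$ and $\Phi^\nu(x,u)=\tfrac1\nu\sum_{\ell=1}^\nu F(x(T),\xi_\ell)+\tfrac12\|x-x_d\|_{L^2}^2+\tfrac{\delta}{2}\|u-u_d\|_{L^2}^2$. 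Since the two quadratic tracking terms are identical, deterministic, and (strongly) continuous on $H^1(0,T)^n\times L^2(0,T)^m$, epiconvergence of $\Phi^\nu$ to $\Phi$ reduces to epiconvergence of the random part $(x,u)\mapsto \tfrac1\nu\sum_{\ell=1}^\nu F(x(T),\xi_\ell)$ to $(x,u)\mapsto\mathbb{E}[F(x(T),\xi)]$, because adding a fixed continuous function preserves epiconvergence.

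The key steps, in order, are: \emph{(1)} Reduce to the finite-dimensional terminal map. The functional depends on $(x,u)$ only through $x(T)\in\mathbb{R}^n$, and the evaluation map $x\mapsto x(T)$ is continuous from $H^1(0,T)^n$ into $\mathbb{R}^n$ (indeed $\|x(T)\|$ is controlled by $\|x\|_{H^1}$ via the Sobolev embedding $H^1(0,T)\hookrightarrow C[0,T]$). Hence on the compact set $\mathcal{C}_1\times\mathcal{C}_2$, the terminal values $\{x(T):(x,u)\in\mathcal{C}_1\times\mathcal{C}_2\}$ lie in a compact subset $\mathcal{D}\subseteq\mathbb{R}^n$. \emph{(2)} Check the hypotheses of the cited lemma on $\mathcal{D}$: by assumption $F(\cdot,\xi)$ is continuously differentiable (in particular continuous and, being ``a smoothing function,'' locally Lipschitz) for a.e.\ $\xi$, measurable in $\xi$ for each $z$, and Assumption \ref{assumption3} gives the Lipschitz bound $|F(z_1,\xi)-F(z_2,\xi)|\le\kappa_1(\xi)\|z_1-z_2\|$ with $\mathbb{E}[\kappa_1(\xi)]<\infty$; fixing any $z_0\in\mathcal{D}$, this yields the integrable local majorant $|F(z,\xi)|\le|F(z_0,\xi)|+\kappa_1(\xi)\operatorname{diam}(\mathcal{D})$ on $\mathcal{D}$, so $F(\cdot,\cdot)$ is a random lsc (indeed continuous) integrand with an integrable bound. \emph{(3)} Apply the uniform law of large numbers / epiconvergence conclusion of Lemma 3.5 in \cite{Chen-Wets-Zhang}: w.p.1, $\tfrac1\nu\sum_{\ell=1}^\nu F(\cdot,\xi_\ell)$ epiconverges (in fact converges uniformly, since the integrand is continuous with integrable Lipschitz modulus) to $\mathbb{E}[F(\cdot,\xi)]$ on $\mathcal{D}$. \emph{(4)} Pull back through the continuous surjection $(x,u)\mapsto x(T)$ and add the common continuous quadratic term: composition with a continuous map and addition of a continuous function both preserve epiconvergence, giving $\Phi^\nu\xrightarrow{e}\Phi$ w.p.1 on $\mathcal{C}_1\times\mathcal{C}_2$.

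The step I expect to require the most care is \emph{(4)}, the transfer of epiconvergence from the finite-dimensional image to the infinite-dimensional domain. Epiconvergence is not in general preserved under composition with an arbitrary continuous map; one needs the terminal-value map $(x,u)\mapsto x(T)$ to be continuous \emph{and} to behave well with respect to the liminf/limsup inequalities — here this is fine because the map is continuous and, restricted to the compact metric space $\mathcal{C}_1\times\mathcal{C}_2$, one can argue directly with sequences: for the liminf inequality, any sequence $(x^\nu,u^\nu)\to(x,u)$ has $x^\nu(T)\to x(T)$, and uniform convergence on $\mathcal{D}$ plus continuity of $\mathbb{E}[F(\cdot,\xi)]$ gives $\liminf_\nu \Phi^\nu(x^\nu,u^\nu)\ge\Phi(x,u)$; for the limsup (recovery sequence) inequality, the constant sequence $(x,u)$ works for the same reason. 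Thus the uniform convergence obtained in step (3) makes the epiconvergence essentially automatic on the compact set, and the only genuine inputs are the Sobolev embedding bounding $x(T)$ and the integrable Lipschitz modulus $\kappa_1$ from Assumption \ref{assumption3}. This mirrors the argument of Lemma 3.5 in \cite{Chen-Wets-Zhang}, which is why the statement follows by ``following the proof'' there.
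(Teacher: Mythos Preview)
Your proposal is correct and follows the same approach as the paper: the paper simply states that the lemma follows ``following the proof of Lemma~3.5 in \cite{Chen-Wets-Zhang}'' using that $F(\cdot,\xi)$ is smooth for a.e.\ $\xi$, and you have spelled out precisely how to verify the hypotheses of that lemma (compactness of the terminal values via the Sobolev embedding, the integrable Lipschitz modulus $\kappa_1$ from Assumption~\ref{assumption3}, and the preservation of epiconvergence under the continuous terminal map plus the deterministic quadratic term). Your additional care in step~(4) about pulling back epiconvergence is well placed and does not deviate from the paper's intended argument.
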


Let $\mathcal{Z}^{\epsilon,\nu}$ and $\mathcal{Z}$ denote the solution sets of \eqref{mixedCP-SAA} and \eqref{mixedCP}, respectively.
Let  $\mathcal{S}^{\epsilon,\nu}$ and $\mathcal{S}$ be the feasible solution sets, and  $\hat{\mathcal{S}}^{\epsilon,\nu}$ and $\hat{\mathcal{S}}$ be optimal solution sets of \eqref{OCTCCS-sample} and \eqref{OCTCCS}, respectively.

\begin{theorem}\label{lemma-set-epi}
 Suppose that the conditions of Theorem \ref{feasible-set} hold, then it holds that $\lim_{\epsilon\downarrow0}\lim_{\nu\rightarrow\infty}\mathbb{D}(\mathcal{Z}^{\epsilon,\nu},\mathcal{Z})=0$ w.p.1, $ \lim_{\epsilon\downarrow0}\lim_{\nu\rightarrow\infty}\mathbb{D}(\mathcal{S}^{\epsilon,\nu},\mathcal{S})=0$ w.p.1.
and $\lim_{\epsilon\downarrow0}\lim_{\nu\rightarrow\infty}\mathbb{D}(\hat{\mathcal{S}}^{\epsilon,\nu},\hat{\mathcal{S}})=0$ w.p.1.
\end{theorem}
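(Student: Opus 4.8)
The plan is to establish the three set-convergence statements in the order listed, since each feeds into the next. The overall strategy is to combine (a) convergence of the stochastic data $\frac1\nu\sum_\ell M(\xi_\ell)\to\mathbb E[M(\xi)]$, $\frac1\nu\sum_\ell q(\xi_\ell)\to\mathbb E[q(\xi)]$ and $\frac1\nu\sum_\ell g(z,\xi_\ell)\to\mathbb E[g(z,\xi)]$ (uniformly on compacts, via the Law of Large Numbers together with the Lipschitz/growth bounds in Assumption \ref{assumption3}), (b) a uniform boundedness argument for the terminal states, guaranteeing that all the relevant solution sets lie in a fixed compact set, and (c) the epiconvergence of $\Phi^\nu$ to $\Phi$ from Lemma \ref{epi-con-objective}. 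The outer limit $\epsilon\downarrow0$ is then handled by a standard outer-semicontinuity/diagonalization argument on the relaxed constraint sets.

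First I would treat $\mathbb D(\mathcal Z^{\epsilon,\nu},\mathcal Z)$. Fix a realization in the probability-one event on which the LLN convergences above hold. Take any $x_\nu(T)\in\mathcal Z^{\epsilon,\nu}$; I first argue these are uniformly bounded. Writing $\Phi_\nu(v):=\min\{v,\frac1\nu\sum_\ell[M(\xi_\ell)v+q(\xi_\ell)]\}$, the feasibility $\|\Phi_\nu(x_\nu(T))\|\le\epsilon$ together with the sparse-solution analysis of Theorem \ref{feasible-set} — applied now to the perturbed LCP with matrix $\frac1\nu\sum_\ell M(\xi_\ell)$, which for large $\nu$ is still adequate or a Z-matrix by stability of these classes under small perturbations (or, for Z-matrices, one perturbs the conclusion directly) — yields a bound of the form $\|x_\nu(T)\|_1\le(\beta+o(1))(\|\mathbb E[q(\xi)]\|_1+o(1))+C\epsilon$, uniformly in $\nu$ and for $\epsilon\le1$. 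Hence the $x_\nu(T)$ lie in a fixed compact ball $\mathcal B$. Now pass to a subsequence $x_\nu(T)\to \bar x$. Since $\Phi_\nu\to\Phi_\infty:=\min\{\cdot,\mathbb E[M(\xi)]\cdot+\mathbb E[q(\xi)]\}$ uniformly on $\mathcal B$ and each $\Phi_\infty,\Phi_\nu$ is continuous, $\|\Phi_\infty(\bar x)\|\le\epsilon$; similarly $\frac1\nu\sum_\ell g(x_\nu(T),\xi_\ell)\to\mathbb E[g(\bar x,\xi)]$ and distances are lower semicontinuous, so $\operatorname{dist}(\mathbb E[g(\bar x,\xi)],K)\le\epsilon$. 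Thus every cluster point of a sequence of $\mathcal Z^{\epsilon,\nu}$-points lies in $\mathcal Z^\epsilon:=\{v\ge0$-relaxed-version$\}$, which gives $\limsup_\nu\mathbb D(\mathcal Z^{\epsilon,\nu},\mathcal Z^\epsilon)=0$. Letting $\epsilon\downarrow0$ and using that $\Phi_\infty$ is continuous and $\mathcal V\ne\emptyset$ (so $\mathcal Z$ is nonempty and the relaxed sets shrink to $\mathcal Z$) yields $\lim_{\epsilon\downarrow0}\limsup_\nu\mathbb D(\mathcal Z^{\epsilon,\nu},\mathcal Z)=0$.

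Second, for $\mathbb D(\mathcal S^{\epsilon,\nu},\mathcal S)$: a feasible pair $(x,u)$ of \eqref{OCTCCS-sample} differs from a feasible pair of \eqref{OCTCCS} only in the terminal condition $x(T)\in\mathcal Z^{\epsilon,\nu}$ versus $x(T)\in\mathcal Z$. Given $(x^{\epsilon,\nu},u^{\epsilon,\nu})\in\mathcal S^{\epsilon,\nu}$, by step one pick $\bar b\in\mathcal Z$ with $\|x^{\epsilon,\nu}(T)-\bar b\|$ small; then invoke the solvability of the linear control system \eqref{ODE-inequality} from Assumption \ref{assumption-Y} to produce $(\tilde x,\tilde u)\in\mathcal S$ with terminal value $\bar b$, and — crucially — use that the solution operator $b\mapsto(x,u)$ of \eqref{ODE-4} can be chosen \emph{linear and bounded} from $\mathbb R^n$ into $H^1(0,T)^n\times L^2(0,T)^m$ (the construction in Lemma 7.2 of \cite{FP2018} gives an explicit bounded right inverse), so that replacing $x^{\epsilon,\nu}(T)$ by $\bar b$ costs only $O(\|x^{\epsilon,\nu}(T)-\bar b\|)$ in the $H^1\times L^2$ norm. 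This gives $\operatorname{dist}((x^{\epsilon,\nu},u^{\epsilon,\nu}),\mathcal S)\le C\,\mathbb D(\mathcal Z^{\epsilon,\nu},\mathcal Z)\to0$ in the iterated limit. Finally, for the optimal solution sets $\hat{\mathcal S}^{\epsilon,\nu}$: all of these lie in a fixed compact set (the terminal states are bounded by step one, and then the objective $\Phi^\nu$ being coercive in $u$ modulo the affine dynamics — together with boundedness from below of $F$ — forces $\|u^{\epsilon,\nu}\|_{L^2}$ and hence $\|x^{\epsilon,\nu}\|_{H^1}$ to be bounded). On this compact set Lemma \ref{epi-con-objective} gives $\Phi^\nu\xrightarrow{e}\Phi$ w.p.1, and together with $\mathbb D(\mathcal S^{\epsilon,\nu},\mathcal S)\to0$ and a converse set-approximation (every $(x,u)\in\mathcal S$ is a limit of feasible points of \eqref{OCTCCS-sample}, since $\mathcal Z\subseteq\mathcal Z^{\epsilon,\nu}$ for $\epsilon$ fixed by the LLN for large $\nu$, so $\mathcal S\subseteq\liminf\mathcal S^{\epsilon,\nu}$), the standard epiconvergence theorem on minimizers (e.g. Rockafellar–Wets) yields $\lim_{\epsilon\downarrow0}\lim_{\nu\to\infty}\mathbb D(\hat{\mathcal S}^{\epsilon,\nu},\hat{\mathcal S})=0$ w.p.1.

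The main obstacle I anticipate is the uniform-boundedness step: the sparse-solution bound in Theorem \ref{feasible-set} is stated for the \emph{exact} expected-value LCP, and here it must survive (i) the $\epsilon$-relaxation of the complementarity system and (ii) the sampling perturbation of $M$ and $q$. Handling (i) requires controlling $\|v\|$ from $\|\min\{v,Mv+q\}\|\le\epsilon$, which for an adequate or Z-matrix $M$ can be done but needs a genuine (Lipschitzian-type / Hoffman-type) error bound for the LCP rather than the mere existence statement quoted; handling (ii) requires knowing the adequate/Z-matrix structure — or at least the relevant solvability and bound — is stable under the vanishing perturbation $\frac1\nu\sum_\ell M(\xi_\ell)-\mathbb E[M(\xi)]$. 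I would expect the paper to isolate this as a separate lemma; Z-matrices are not an open class, so some care (perturbing the conclusion, not the hypothesis) is needed there.
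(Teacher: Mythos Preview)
Your overall architecture---terminal sets $\mathcal Z^{\epsilon,\nu}\to\mathcal Z$, then feasible sets $\mathcal S^{\epsilon,\nu}\to\mathcal S$ via the linear control system, then optimal sets via epiconvergence of $\Phi^\nu$---matches the paper exactly. A few differences in execution are worth noting. For the terminal sets, the paper does not pass through $\mathcal Z^\epsilon$ and then let $\epsilon\downarrow0$; instead it rewrites the relaxed SAA constraint as $\phi^\nu_\epsilon(x(T))=0$, $\psi^\nu_\epsilon(x(T))\in K$ (absorbing the $\epsilon$-slack into perturbation vectors $v^\nu,w^\nu$ with norm $\le\epsilon$), observes that $\phi^\nu_\epsilon\to\phi$ and $\psi^\nu_\epsilon\to\psi$ uniformly on a fixed compact $\mathcal X$, and invokes Theorem~5.12 in \cite{Shapiro2009} to get the iterated deviation limit in one stroke. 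Your direct subsequence argument is equivalent but the citation saves work. For the feasible sets, the paper argues by continuity of the solution of \eqref{ODE-4} with respect to the terminal value $x(T)$, rather than through an explicit bounded linear right inverse; your formulation is more concrete but they amount to the same thing. For the optimal sets, the paper cites Theorem~2.5 in \cite{Wets1981} in place of the Rockafellar--Wets result you mention.

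Regarding the obstacle you anticipated: the paper does \emph{not} supply the LCP error-bound or perturbation-stability lemma you expected. It simply asserts, citing \eqref{xT} and the ball condition $\mathcal B(0,\beta\mathbb E[\kappa_2(\xi)]\|\mathbb E[q(\xi)]\|_1)\subseteq K$, that there is a compact set $\mathcal X$ with $\mathcal Z\subseteq\mathcal X$ and $\mathcal Z^{\epsilon,\nu}\subseteq\mathcal X$ for small $\epsilon$ and large $\nu$. Since \eqref{xT} bounds only the \emph{sparse} solution of the exact LCP---not every element of $\mathcal Z$, and certainly not every element of the relaxed SAA set $\mathcal Z^{\epsilon,\nu}$---your worry is well founded; the paper treats this step as self-evident rather than proving a Hoffman-type bound for the relaxed system or establishing stability of the adequate/Z-matrix hypothesis under sampling. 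So your instinct to flag this as the crux is correct, but be aware that the paper's own proof does not resolve it more carefully than you do.
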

\begin{proof}
From Theorem \ref{feasible-set} and Theorem \ref{feasible-set-1}, $\mathcal{Z}$ and
$\mathcal{Z}^{\epsilon,\nu}$ with any $\epsilon>0$ and large enough $\nu$  are nonempty.
By (\ref{xT}) and  $\mathcal{B}(0, \beta\mathbb{E}[\kappa_2(\xi)]\|\mathbb{E}[q(\xi)]\|_1) \subseteq  K$ in Theorem \ref{feasible-set},
 we know that there is a compact set $\mathcal{X}$ such that $\mathcal{Z}\subseteq \mathcal{X}$ and $\mathcal{Z}^{\epsilon,\nu}\subseteq \mathcal{X}$ for sufficiently small $\epsilon$ and sufficiently large $\nu$.
Let
 $$
\phi(x(T)):=\min \{x(T),  \mathbb{E}[M(\xi)x(T)+q(\xi)]\}
\quad {\rm and} \quad \psi(x(T)):=\mathbb{E}[g(x(T),\xi)].
$$
For $x(T) \in\mathcal{Z}$, $\phi(x(T))=0$ and $\psi(x(T))\in K.$
From (\ref{mixedCP-SAA}), for $x(T) \in\mathcal{Z}^{\epsilon,\nu}$, there are $v^\nu \in \mathbb{R}^n, w^\nu\in \mathbb{R}^\ell$ with $\|v^\nu \|\le \epsilon$ and $ \|w^\nu\|\le \epsilon$ such that
 $$
\begin{aligned}
&\phi^\nu_\epsilon (x(T)):=\min \left\{x(T),  \frac{1}{\nu}\sum_{\ell=1}^\nu[M(\xi_\ell)x(T)+q(\xi_\ell)]\right\} +v^\nu=0, &\\
& \psi^\nu_\epsilon (x(T)):=\frac{1}{\nu}\sum_{\ell=1}^\nu g(x(T),\xi_\ell) +w^\nu \in
 K.
\end{aligned}
$$
Since $\phi$ and $\psi$ are continuous, and $M(\cdot), q(\cdot)$ and $ g(x(T),\cdot)$
satisfy Assumption \ref{assumption3}, we have
$\phi^\nu_\epsilon$ and $\psi^\nu_\epsilon$ converge to $\phi$ and $\psi$ uniformly, respectively on the compact set $\mathcal{X}$, that is,
$$\lim_{\epsilon \downarrow 0}\lim_{\nu \to \infty} \max_{x(T)\in \mathcal{X}}\|\phi^\nu_\epsilon(x(T))-\phi(x(T))\|=0,$$
and
$$ \lim_{\epsilon \downarrow 0}\lim_{\nu \to \infty} \max_{x(T)\in \mathcal{X}}\|\psi^\nu_\epsilon(x(T))-\psi(x(T))\|=0.$$
Therefore, following Theorem 5.12 in \cite{Shapiro2009}, $\lim_{\epsilon\downarrow0}\lim_{\nu\rightarrow\infty}\mathbb{D}(\mathcal{Z}^{\epsilon,\nu},\mathcal{Z})=0$ w.p.1.

Now we show $\lim_{\epsilon\downarrow0}\lim_{\nu\rightarrow\infty}\mathbb{D}(\mathcal{S}^{\epsilon,\nu},\mathcal{S})=0$ holds w.p.1. Note that $\mathcal{S}^{\epsilon,\nu}$ and $\mathcal{S}$ are two nonempty closed sets.
Obviously, two nonempty closed sets $\mathcal{S}$ and $\mathcal{S}^{\epsilon,\nu}$ are the solution sets of problem \eqref{ODE-inequality} with terminal sets $\mathcal{Z}$ and $\mathcal{Z}^{\epsilon,\nu}$, respectively.
For any $p\in L^2(0,T)^l$, the pair $(\|x\|_{H^1}, \|u\|_{L^2})$, where $(x,u)$ is a solution of problem \eqref{ODE-4}, is uniquely defined by the terminal point $x(T)$. In addition, it is clear that a solution $(x,u)$ of problem \eqref{ODE-4}  is continuous with respect to the terminal point $x(T)$. Hence, for any $(x^{\epsilon,\nu},u^{\epsilon,\nu})\in \mathcal{S}^{\epsilon,\nu}$ and $(x,u)\in \mathcal{S}$, we have $(x^{\epsilon,\nu},u^{\epsilon,\nu})\rightarrow (x,u)$ w.p.1 in the norm $\|\cdot\|_{H^1}\times \|\cdot\|_{L^2}$ when $x^{\epsilon,\nu}(T)\rightarrow x(T)$ w.p.1 as $\epsilon\downarrow0$ and $\nu\rightarrow\infty$. It then concludes $\lim_{\epsilon\downarrow0}\lim_{\nu\rightarrow\infty}\mathbb{D}(\mathcal{S}^{\epsilon,\nu},\mathcal{S})=0$ w.p.1.

It is clear that, $\arg\min \Phi=\hat{\mathcal{S}}\subseteq \mathcal{S}$ and $\arg\min \Phi^{\nu}=\hat{\mathcal{S}}^{\epsilon,\nu} \subseteq \mathcal{S}^{\epsilon,\nu}$ for sufficiently small $\epsilon$ and sufficiently large $\nu$. By $\lim_{\epsilon\downarrow0}\lim_{\nu\rightarrow\infty}\mathbb{D}(\mathcal{S}^{\epsilon,\nu},\mathcal{S})=0$ w.p.1, we have, for $(\hat{x}^{\epsilon,\nu},\hat{u}^{\epsilon,\nu}) \in \hat{\mathcal{S}}^{\epsilon,\nu}$, there is $(\hat{x},\hat{u})\in \mathcal{S}$ such that $(\hat{x}^{\epsilon,\nu},\hat{u}^{\epsilon,\nu})\rightarrow (\hat{x},\hat{u})$ in the norm $\|\cdot\|_{H^1}\times \|\cdot\|_{L^2}$ as $\epsilon\downarrow0$ and $\nu\rightarrow\infty$. In addition, according to Theorem 2.5 in \cite{Wets1981}, we obtain $(\hat{x},\hat{u})\in \hat{\mathcal{S}}$ by $\Phi^{\nu}\xrightarrow{epi} \Phi$ w.p.1, which implies $\lim_{\epsilon\downarrow0}\lim_{\nu\rightarrow\infty}\mathbb{D}(\hat{\mathcal{S}}^{\epsilon,\nu},\hat{\mathcal{S}})=0$ w.p.1.
\end{proof}

\subsection{Asymptotics of the SAA optimal value}\label{se:optimal-value}

We introduce the relaxation of problem \eqref{OCTCCS} with a parameter $\epsilon>0$ as follows
\begin{equation}\label{OCTCCS-epsilon}
\begin{aligned}
& \min_{x,u}\,\,\Phi(x,u)&\\
&\textrm{s.t.}
\left\{
\begin{aligned}
&\left.
\begin{aligned}
&\dot{x}(t)=Ax(t)+Bu(t), &\\
&Cx(t)+Du(t)-f(t)\leq 0,&\\
\end{aligned}\right\}a.e.\,\,t\in(0,T),&\\
&\| \min\{ x(T), \mathbb{E}[M(\xi)x(T)+q(\xi)]\} \| \leq \epsilon,&\\
&x(0)=x_0,\,\mathbb{E}[g(x(T),\xi)]\in K^\epsilon.
\end{aligned}
\right.
\end{aligned}
\end{equation}

Denote by $\mathcal{Z}^\epsilon$ the solution set of the terminal constraints of \eqref{OCTCCS-epsilon}. Denote by $\mathcal{S}^\epsilon$ and $\hat{\mathcal{S}}^\epsilon$ the feasible solution set and optimal solution set of \eqref{OCTCCS-epsilon}, respectively.
Recall that $\mathcal{Z}$ is the solution set of \eqref{mixedCP}, and $\mathcal{S}$ and $\hat{\mathcal{S}}$ are the feasible solution set and optimal solution set of \eqref{OCTCCS}, respectively.
 It is clear that $\mathcal{Z}\subseteq \mathcal{Z}^\epsilon$ and $\mathcal{S}\subseteq \mathcal{S}^\epsilon$, which mean that $\Phi(\hat{x}^\epsilon,\hat{u}^\epsilon)\leq \Phi(\hat{x},\hat{u})$ for any $(\hat{x}^\epsilon,\hat{u}^\epsilon)\in \hat{\mathcal{S}}^\epsilon $ and $(\hat{x},\hat{u})\in \hat{\mathcal{S}} $. Therefore, $\mathcal{Z}^\epsilon$, $\mathcal{S}^\epsilon$ and $\hat{\mathcal{S}}^\epsilon$ are nonempty since $\mathcal{Z}$ and $\mathcal{S}$ are nonempty.

According to Theorem \ref{feasible-set} and Remark \ref{RE-1}, we also conclude that
$\mathcal{Z}^\epsilon$ and $\hat{\mathcal{S}}^\epsilon$ are compact. It can also be derived that $\lim_{\epsilon\downarrow0}\mathbb{D}(\mathcal{Z}^\epsilon,\mathcal{Z})=0$, $\lim_{\epsilon\downarrow0}\mathbb{D}(\mathcal{S}^\epsilon,\mathcal{S})=0$ and $\lim_{\epsilon\downarrow0}\mathbb{D}(\hat{\mathcal{S}}^\epsilon,\hat{\mathcal{S}})=0$. It is clear that \eqref{OCTCCS-sample} is the corresponding SAA problem of \eqref{OCTCCS-epsilon}. By Theorem \ref{lemma-set-epi}, we conclude that $\lim_{\nu\rightarrow\infty}\mathbb{D}({\mathcal{Z}}^{\epsilon,\nu},{\mathcal{Z}}^{\epsilon})=0$, $\lim_{\nu\rightarrow\infty}\mathbb{D}({\mathcal{S}}^{\epsilon,\nu},{\mathcal{S}}^{\epsilon})=0$ and $\lim_{\nu\rightarrow\infty}\mathbb{D}(\hat{\mathcal{S}}^{\epsilon,\nu},\hat{\mathcal{S}}^{\epsilon})=0$.

In the rest of this section, we study the asymptotics of  optimal value of the SAA problem \eqref{OCTCCS-sample} for a fixed $\epsilon>0$.

Since $\min\{x(T), \mathbb{E}[M(\xi)x(T)+q(\xi)]\}=0$
and $\mathbb{E}[g(x(T),\xi)]\in K$ for any $x(T)\in \mathcal{Z}$,
we have $\mathcal{Z}\subseteq \textrm{int} \mathcal{Z}^\epsilon$,
 which means that $\textrm{int} \mathcal{Z}^\epsilon \neq \emptyset$.
 Let
 $$\hat{\mathcal{Z}}=\{x(T):\,(x,u)\in \hat{\mathcal{S}}\}\quad {\rm  and }\quad \hat{\mathcal{Z}}^\epsilon=\{x(T):\,(x,u)\in\hat{\mathcal{S}}^\epsilon\}.$$ Obviously, we have $\hat{\mathcal{Z}}\subseteq \textrm{int} \mathcal{Z}^\epsilon$ and $\lim_{\epsilon\downarrow0}\mathbb{D}(\hat{\mathcal{Z}}^{\epsilon},\hat{\mathcal{Z}})=0$.

 We give the following assumptions.
\begin{assumption}\label{assumption-interior-point}
 The set  $\hat{\mathcal{Z}}$ is a singleton.
\end{assumption}

\begin{assumption}\label{assumption-square}
\begin{itemize}
  \item[(i)]  There exists a nonnegative measurable function $\kappa_1(\xi)$ with $\mathbb{E}[\kappa_1^2(\xi)]<\infty$ such that
 for any $z_1$, $z_2\in \mathbb{R}^n$ and $\xi\in\Xi$,
\[|F(z_1,\xi)-F(z_2,\xi)|\leq \kappa_1(\xi)\|z_1-z_2\|,\]
and $\mathbb{E}[F^2(z,\xi)]<\infty$ for any $z\in \mathbb{R}^n$.
  \item[(ii)] The function $\mathbb{E}[F(\cdot,\xi)]$ is a strongly convex function, that is, there is a constant $\mu>0$ such that, for any $z_1$, $z_2\in \mathbb{R}^n$ and $\tau\in(0,1)$,
\begin{eqnarray*}
\hspace{-0.3in}\mathbb{E}[F((1-\tau)z_1+\tau z_2,\xi)]\leq (1-\tau)\mathbb{E}[F(z_1,\xi)]+\tau \mathbb{E}[F(z_2,\xi)]-\frac{\mu \tau(1-\tau)}{2}\|z_1-z_2\|^2.
\end{eqnarray*}
\end{itemize}
\end{assumption}
\begin{theorem}\label{error-optimal-values-saa-epsilon}
Suppose that the conditions of Theorem \ref{feasible-set}, Assumption  \ref{assumption-interior-point} and Assumption \ref{assumption-square} hold. Let $(\hat{x}^\epsilon,\hat{u}^\epsilon)$ and $(\hat{x}^{\epsilon,\nu},\hat{u}^{\epsilon,\nu})$ be optimal solutions of \eqref{OCTCCS-epsilon} and \eqref{OCTCCS-sample}, respectively.
Then for sufficiently small $\epsilon$ and sufficiently large $\nu$, we have
 \[\sqrt{\nu}(\Phi^\nu(\hat{x}^{\epsilon,\nu},\hat{u}^{\epsilon,\nu})
-\Phi(\hat{x}^\epsilon,\hat{u}^\epsilon))\quad  \xrightarrow{D} \quad \mathcal{N}(0,\sigma^2(\hat{x}^\epsilon(T))),\]
where `` $ \xrightarrow{D}$ '' denotes convergence in distribution and $\mathcal{N}(0,\sigma^2(\hat{x}^\epsilon(T)))$ denotes the norm distribution with mean 0 and variance $\sigma^2(\hat{x}^\epsilon(T)):=\mathbb{V}\textrm{ar}[F(\hat{x}^\epsilon(T),\xi)]$.
\end{theorem}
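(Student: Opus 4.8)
The plan is to push everything onto the terminal state $b:=x(T)\in\mathbb{R}^n$, on which the stochastic parts of both $\Phi$ and $\Phi^\nu$ exclusively depend, and then to run the classical sandwich argument for the asymptotics of SAA optimal values (cf.\ \cite[Chapter 5]{Shapiro2009}). For $b\in\mathbb{R}^n$ let $\theta(b)$ be the optimal value of $\min\{\tfrac12\|x-x_d\|_{L^2}^2+\tfrac\delta2\|u-u_d\|_{L^2}^2\}$ over all $(x,u)\in H^1(0,T)^n\times L^2(0,T)^m$ solving \eqref{ODE-inequality} with $x(T)=b$; by Assumption~\ref{assumption-Y} this set is nonempty for every $b$, and since the underlying constraints are jointly convex in $(x,u,b)$ and the cost is convex in $(x,u)$, $\theta$ is a finite convex function on $\mathbb{R}^n$. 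Setting $\phi_0(b):=\mathbb{E}[F(b,\xi)]+\theta(b)$ and $\phi^\nu_0(b):=\tfrac1\nu\sum_{\ell=1}^\nu F(b,\xi_\ell)+\theta(b)$, problems \eqref{OCTCCS-epsilon} and \eqref{OCTCCS-sample} reduce to $\min_{b\in\mathcal{Z}^\epsilon}\phi_0(b)$ and $\min_{b\in\mathcal{Z}^{\epsilon,\nu}}\phi^\nu_0(b)$, and by Assumption~\ref{assumption-square}(ii) $\phi_0$ is strongly convex with modulus $\mu$. From the observations preceding Assumption~\ref{assumption-interior-point} together with Assumption~\ref{assumption-interior-point}, for all sufficiently small $\epsilon$ the optimal terminal set $\hat{\mathcal{Z}}^\epsilon$ is contained in $\mathrm{int}\,\mathcal{Z}^\epsilon$; a local minimizer of the convex function $\phi_0$ over an open set is a global minimizer, so strong convexity then forces $\hat{\mathcal{Z}}^\epsilon$ to be the singleton $\{\hat b^\epsilon\}$ with $\hat b^\epsilon=\hat x^\epsilon(T)$ the (unconstrained) minimizer of $\phi_0$, and in particular the strict inequalities $\|\min\{\hat b^\epsilon,\mathbb{E}[M(\xi)\hat b^\epsilon+q(\xi)]\}\|<\epsilon$ and $\mathrm{dist}(\mathbb{E}[g(\hat b^\epsilon,\xi)],K)<\epsilon$ hold.

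Next I transfer feasibility between the relaxed problem and its SAA. By the strong law of large numbers, $\tfrac1\nu\sum_\ell[M(\xi_\ell)\hat b^\epsilon+q(\xi_\ell)]\to\mathbb{E}[M(\xi)\hat b^\epsilon+q(\xi)]$ and $\tfrac1\nu\sum_\ell g(\hat b^\epsilon,\xi_\ell)\to\mathbb{E}[g(\hat b^\epsilon,\xi)]$ w.p.1; combined with the strict inequalities above and the continuity of $\min\{\cdot,\cdot\}$ and $\mathrm{dist}(\cdot,K)$, for all sufficiently large $\nu$ w.p.1 the point $(\hat x^\epsilon,\hat u^\epsilon)$ is feasible for \eqref{OCTCCS-sample}, hence $\Phi^\nu(\hat x^{\epsilon,\nu},\hat u^{\epsilon,\nu})\le\Phi^\nu(\hat x^\epsilon,\hat u^\epsilon)$. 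Conversely, Theorem~\ref{lemma-set-epi} and the consequences drawn from it, together with the singleton property, give $\hat b^\nu:=\hat x^{\epsilon,\nu}(T)\to\hat b^\epsilon$ w.p.1 (using that $(x,u)\mapsto x(T)$ is continuous on $H^1(0,T)^n\times L^2(0,T)^m$); the same LLN/continuity argument then shows $(\hat x^{\epsilon,\nu},\hat u^{\epsilon,\nu})$ is feasible for \eqref{OCTCCS-epsilon} for large $\nu$ w.p.1, hence $\Phi(\hat x^{\epsilon,\nu},\hat u^{\epsilon,\nu})\ge\Phi(\hat x^\epsilon,\hat u^\epsilon)$.

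Write $\Delta^0_\nu:=\tfrac1\nu\sum_\ell F(\hat b^\epsilon,\xi_\ell)-\mathbb{E}[F(\hat b^\epsilon,\xi)]$ and $R_\nu(b):=\big(\tfrac1\nu\sum_\ell F(b,\xi_\ell)-\mathbb{E}[F(b,\xi)]\big)-\Delta^0_\nu$, so that $\Phi^\nu-\Phi$ evaluated at $(\hat x^{\epsilon,\nu},\hat u^{\epsilon,\nu})$ equals $R_\nu(\hat b^\nu)+\Delta^0_\nu$ (the quadratic terms of $\Phi$ and $\Phi^\nu$ cancel). The two feasibility facts above then yield the sandwich
\[
\Delta^0_\nu+R_\nu(\hat b^\nu)\ \le\ \Phi^\nu(\hat x^{\epsilon,\nu},\hat u^{\epsilon,\nu})-\Phi(\hat x^\epsilon,\hat u^\epsilon)\ \le\ \Delta^0_\nu .
\]
By Assumption~\ref{assumption-square}(i), $\mathbb{E}[F^2(\hat b^\epsilon,\xi)]<\infty$, so the central limit theorem gives $\sqrt\nu\,\Delta^0_\nu\xrightarrow{D}\mathcal{N}(0,\mathbb{V}\textrm{ar}[F(\hat b^\epsilon,\xi)])$, and $\mathbb{V}\textrm{ar}[F(\hat b^\epsilon,\xi)]=\sigma^2(\hat x^\epsilon(T))$. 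Consequently, once one shows $\sqrt\nu\,R_\nu(\hat b^\nu)\to0$ in probability, multiplying the sandwich by $\sqrt\nu$ and invoking Slutsky's lemma yields exactly the asserted convergence in distribution.

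The main obstacle is precisely this last claim: $R_\nu(\hat b^\nu)$ is the empirical fluctuation of $F(\cdot,\xi)$, centered at $\hat b^\epsilon$ and evaluated at the random, sample-dependent point $\hat b^\nu$, and one needs it to be $o_P(\nu^{-1/2})$. Both $\hat b^\nu$ and $\hat b^\epsilon$ lie in a fixed compact set $\mathcal{X}\subseteq\mathbb{R}^n$ (by the bound \eqref{xT}, the compactness of $\mathcal{Z}^\epsilon$, and the compact set constructed in the proof of Theorem~\ref{lemma-set-epi}), and on $\mathcal{X}$ the map $b\mapsto F(b,\xi)$ is $\kappa_1(\xi)$-Lipschitz with $\mathbb{E}[\kappa_1^2(\xi)]<\infty$, so the function class $\{F(b,\cdot):b\in\mathcal{X}\}$ has a square-integrable envelope and finite bracketing integral; hence the associated empirical process is stochastically equicontinuous, i.e.\ $\lim_{\rho\downarrow0}\limsup_{\nu\to\infty}\mathbb{P}\big(\sup_{\|b-\hat b^\epsilon\|\le\rho}\sqrt\nu\,|R_\nu(b)|>\eta\big)=0$ for every $\eta>0$. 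Combining this with $\hat b^\nu\to\hat b^\epsilon$ w.p.1 gives $\sqrt\nu\,R_\nu(\hat b^\nu)\to0$ in probability. (Alternatively, one may bypass empirical-process theory: strong convexity of $\phi_0$ yields the quadratic growth $\phi_0(b)\ge\phi_0(\hat b^\epsilon)+\tfrac\mu2\|b-\hat b^\epsilon\|^2$, which together with the maximal inequality $\mathbb{E}\sup_{\|b-\hat b^\epsilon\|\le\rho}\sqrt\nu\,|R_\nu(b)|=O(\rho)$ for Lipschitz classes forces $\|\hat b^\nu-\hat b^\epsilon\|=O_P(\nu^{-1/2})$, whence $\sqrt\nu\,R_\nu(\hat b^\nu)=O_P(\|\hat b^\nu-\hat b^\epsilon\|)=o_P(1)$.) Making this random-evaluation step rigorous is where the genuine work lies; the remaining steps are essentially bookkeeping around the $\epsilon$-relaxation and the reduction to the terminal state $b=x(T)$.
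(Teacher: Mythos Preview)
Your argument is correct and takes a genuinely different route from the paper. The paper works entirely in the function space $H^1(0,T)^n\times L^2(0,T)^m$: after showing $\hat{\mathcal Z}^\epsilon$ is a singleton (via the same strong-convexity contradiction you use), it builds a single compact set $\mathfrak X$ in function space that is simultaneously contained in $\mathcal S^\epsilon$ and $\mathcal S^{\epsilon,\nu}$ and contains optimal solutions of both problems, and then applies \cite[Theorem~5.7]{Shapiro2009} to $\min_{\mathfrak X}\Phi$ versus $\min_{\mathfrak X}\Phi^\nu$ as a black box.

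You instead collapse everything to the finite-dimensional terminal state $b=x(T)$ via the value function $\theta(b)$, which is a clean simplification the paper does not make (and is legitimate since the only stochastic piece of $\Phi$ and $\Phi^\nu$ depends on $x(T)$ alone). You then carry out the mutual-feasibility sandwich and the CLT/stochastic-equicontinuity step explicitly rather than citing Shapiro's theorem. What the paper's approach buys is brevity: once $\mathfrak X$ is in hand, a single citation finishes the proof. What your approach buys is transparency and a lower-dimensional setting: the convexity of $\theta$, the identification of $\hat b^\epsilon$ as the unconstrained minimizer of $\phi_0$, and the Lipschitz-parametrized Donsker argument all live in $\mathbb R^n$, and you make visible exactly where Assumption~\ref{assumption-square}(i) (square-integrable Lipschitz constant) enters, namely to control $\sqrt\nu\,R_\nu(\hat b^\nu)$. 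One small point of polish: in the ``converse'' feasibility step, the LLN is not actually needed---feasibility of $(\hat x^{\epsilon,\nu},\hat u^{\epsilon,\nu})$ for \eqref{OCTCCS-epsilon} follows from continuity of the deterministic constraint functions together with $\hat b^\nu\to\hat b^\epsilon$ and the strict inequalities at $\hat b^\epsilon$.
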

\begin{proof}
Since $\hat{\mathcal{Z}}$ is a singleton, $\hat{\mathcal{Z}}\subseteq \textrm{int} \mathcal{Z}^\epsilon$ and $\lim_{\epsilon\downarrow0}\mathbb{D}(\hat{\mathcal{Z}}^{\epsilon},\hat{\mathcal{Z}})=0$, we have $\hat{\mathcal{Z}}^\epsilon\subseteq \textrm{int} \mathcal{Z}^\epsilon$  for sufficiently small $\epsilon$, which means that there is a convex set $\mathcal{Z}_{\mathcal{X}}$ such that $\hat{\mathcal{Z}}^{\epsilon}\subseteq \mathcal{Z}_{\mathcal{X}} \subseteq \mathcal{Z}^\epsilon$ for sufficiently small $\epsilon$.
We can also obtain that $\hat{\mathcal{Z}}^{\epsilon}$ is a singleton for sufficiently small $\epsilon$ under Assumption \ref{assumption-square}(ii). We argue it by contradiction. Suppose $(\hat{x}^\epsilon,\hat{u}^\epsilon)$ and $(\check{x}^\epsilon,\check{u}^\epsilon)$ are two optimal solutions of \eqref{OCTCCS-epsilon} with $\hat{x}^\epsilon(T)\neq\check{x}^\epsilon(T)$. Then $(x^\epsilon_\tau,u^\epsilon_\tau):=((1-\tau)\hat{x}^\epsilon+\tau \check{x}^\epsilon,(1-\tau)\hat{u}^\epsilon+\tau \check{u}^\epsilon)$ with $\tau\in(0,1)$ is also a feasible solution of \eqref{OCTCCS-epsilon}, since $x^\epsilon_\tau(T)\in \mathcal{Z}_{\mathcal{X}} \subseteq \mathcal{Z}^\epsilon$. Moreover,
\begin{eqnarray*}
& & \Phi(x^\epsilon_\tau,u^\epsilon_\tau)\leq (1-\tau)\Phi(\hat{x}^\epsilon,\hat{u}^\epsilon)+\tau\Phi(\check{x}^\epsilon,\check{u}^\epsilon)
-\frac{\mu \tau(1-\tau)}{2}\|\hat{x}^\epsilon(T)-\check{x}^\epsilon(T)\|^2,
\end{eqnarray*}
which means $\Phi(x^\epsilon_\tau,u^\epsilon_\tau)<\Phi(\hat{x}^\epsilon,\hat{u}^\epsilon)$ since $\Phi(\hat{x}^\epsilon,\hat{u}^\epsilon)=\Phi(\check{x}^\epsilon,\check{u}^\epsilon)$ and $\hat{x}^\epsilon(T)\neq\check{x}^\epsilon(T)$. It contradicts the assumption that $(\hat{x}^\epsilon,\hat{u}^\epsilon)$ is an optimal solution of \eqref{OCTCCS-epsilon}, and then we know that $\hat{\mathcal{Z}}^{\epsilon}$ is a singleton for sufficiently small $\epsilon$.

In the following argument, $\epsilon>0$ is a fixed number such that $\hat{\mathcal{Z}}^\epsilon$ is
singleton and $\hat{\mathcal{Z}}^\epsilon \subseteq \textrm{int}{\mathcal{Z}}^\epsilon$.
Denote $\hat{\mathcal{Z}}^{\epsilon,\nu}=\{x(T):\,(x,u)\in \hat{\mathcal{S}}^{\epsilon,\nu}\}$. We then obtain that $\lim_{\nu\rightarrow \infty}\mathbb{D}(\hat{\mathcal{Z}}^{\epsilon,\nu},\hat{\mathcal{Z}}^{\epsilon})=0$ and $\hat{\mathcal{Z}}^{\epsilon}\subseteq \textrm{int} \mathcal{Z}^{\epsilon,\nu}$ for sufficiently large $\nu$ according to $\lim_{\nu\rightarrow \infty}\mathbb{D}({\mathcal{Z}}^{\epsilon,\nu},{\mathcal{Z}}^{\epsilon})=0$.
Therefore, there is a $(\hat{x}^{\epsilon,\nu},\hat{u}^{\epsilon,\nu})\in \hat{\mathcal{S}}^{\epsilon,\nu}$ such that $\hat{x}^{\epsilon,\nu}(T)\in \textrm{int} \mathcal{Z}^{\epsilon,\nu}$ with sufficiently large $\nu$, which implies that,
for sufficiently large $\nu$, there is a compact set $\mathcal{X}$ such that $\hat{\mathcal{Z}}^{\epsilon}\subseteq \mathcal{X}\subseteq \mathcal{Z}^\epsilon$ and $\hat{x}^{\epsilon,\nu}(T)\in\mathcal{X}\subseteq \mathcal{Z}^{\epsilon,\nu}$.

 The solution $(x,u)$ of ODE \eqref{ODE-inequality} is continuous with respect to the state terminal value $x(T)$ and the pair $(\|x\|_{H^1},\|u\|_{L^2})$ is uniquely defined by $x(T)$. Therefore,
  there is a compact set $\mathfrak{X}$ such that $\hat{\mathcal{S}}^{\epsilon} \subseteq \mathfrak{X} \subseteq {\mathcal{S}}^{\epsilon}$ and $ \mathfrak{X} \subseteq {\mathcal{S}}^{\epsilon,\nu}$ with $\hat{\mathcal{S}}^{\epsilon,\nu}\cap \mathfrak{X} \neq \emptyset$.
 To derive the error of approximation for optimal value of \eqref{OCTCCS-sample} to that of \eqref{OCTCCS-epsilon}, it suffices to investigate the error approximation for optimal value of the following problem
 \begin{eqnarray}\label{epsilon-true}
 \min_{(x,u)\in \mathfrak{X}} \Phi(x,u)
 \end{eqnarray}
 and its SAA problem
 \begin{eqnarray}\label{epsilon-SAA}
 \min_{(x,u)\in \mathfrak{X}} \Phi^\nu(x,u),
 \end{eqnarray}
where $\Phi$ and $\Phi^\nu$ are defined in \eqref{OCTCCS-epsilon} and \eqref{OCTCCS-sample}, respectively. Clearly, $\mathfrak{X} \subseteq {\mathcal{S}}^{\epsilon}$ with $\hat{\mathcal{S}}^{\epsilon} \cap \mathfrak{X} \neq \emptyset$ and $ \mathfrak{X} \subseteq {\mathcal{S}}^{\epsilon,\nu}$ with $\hat{\mathcal{S}}^{\epsilon,\nu}\cap \mathfrak{X} \neq \emptyset$ mean that an optimal solution of  \eqref{epsilon-true}  is an optimal solution of \eqref{OCTCCS-epsilon}, and an optimal solution of  \eqref{epsilon-SAA} is also an optimal solution of \eqref{OCTCCS-sample}.
Therefore, according to Theorem 5.7 in \cite{Shapiro2009}, we can obtain that, under Assumption \ref{assumption-square},
\begin{eqnarray*}
&&\sqrt{\nu}(\Phi^\nu(\hat{x}^{\epsilon,\nu},\hat{u}^{\epsilon,\nu})
-\Phi(\hat{x}^\epsilon,\hat{u}^\epsilon))\\
&&=\sqrt{\nu}
(\frac{1}{\nu} \sum_{\ell=1}^\nu F(\hat{x}^{\epsilon,\nu}(T),\xi_\ell)
+\frac{1}{2}\|\hat{x}^{\epsilon,\nu}-x_d\|_{L^2}^2+
\frac{\delta}{2}\|\hat{u}^{\epsilon,\nu}-u_d\|_{L^2}^2\\
&& \quad \quad \quad - \mathbb{E}[F(\hat{x}^\epsilon(T),\xi)] - \frac{1}{2}\|\hat{x}^{\epsilon}-x_d\|_{L^2}^2-
\frac{\delta}{2}\|\hat{u}^{\epsilon}-u_d\|_{L^2}^2)\\
&&\xrightarrow{D} \inf_{(x,u)\in\hat{\mathcal{S}}^{\epsilon} } Y(x,u),
\end{eqnarray*}
where $Y(x,u)$ has a normal distribution with mean 0 and variance $\mathbb{V}\textrm{ar}[F(x(T),\xi)]$ with $(x,u)\in\hat{\mathcal{S}}^{\epsilon} $. Since $\hat{\mathcal{Z}}^\epsilon=\{\hat{x}^\epsilon(T)\}$ is a singleton,  $Y(x,u)$ for any $(x,u)\in\hat{\mathcal{S}}^{\epsilon}$ has the same normal distribution with mean 0 and variance $\mathbb{V}\textrm{ar}[F(\hat{x}^{\epsilon}(T),\xi)]$. It then concludes our desired result.
\end{proof}

\section{The time-stepping method}\label{se:time}

We now adopt the time-stepping method for solving problem \eqref{OCTCCS-sample} with a fixed sample $\{\xi_1,\ldots, \xi_\nu\}$, which uses a finite-difference formula to approximate the time derivative $\dot{x}$. It begins with the division of the time interval $[0,T]$ into $N$ subintervals for a fixed step size $h={T}/{N}=t_{i+1}-t_{i}$ where $i=0,\cdot\cdot\cdot,N-1$.
 Starting from $\mathbf{x}_0^\nu=x_0$, we compute two finite sets of vectors
$
\{\mathbf{x}^{\epsilon,\nu}_{1},\mathbf{x}^{\epsilon,\nu}_{2},\cdot\cdot\cdot,\mathbf{x}^{\epsilon,\nu}_{N}\}\subset \mathbb{R}^n$ and $ \{\mathbf{u}^{\epsilon,\nu}_{1},\mathbf{u}^{\epsilon,\nu}_{2},\cdot\cdot\cdot,\mathbf{u}^{\epsilon,\nu}_{N}\}\subset\mathbb{R}^m
$
in the following manner:
\begin{equation}\label{OCTCCS-sample-time}
\begin{aligned}
& \min_{\{\mathbf{x}_i,\mathbf{u}_i\}_{i=1}^N}\,\,\frac{1}{\nu}\sum_{\ell=1}^\nu F(\mathbf{x}_N,\xi_\ell)+\frac{h}{2}\sum_{i=1}^N\left(\|\mathbf{x}_i-x_{d,i}\|^2+{\delta}\|\mathbf{u}_i-u_{d,i}\|^2\right)&\\
&\textrm{s.t.}
\left\{
\begin{aligned}
&\left.
\begin{aligned}
&\mathbf{x}_{i+1}-\mathbf{x}_i=hA \mathbf{x}_{i+1}+hB\mathbf{u}_{i+1}, &\\
& C \mathbf{x}_{i+1}+D\mathbf{u}_{i+1}-f_{i+1}\leq 0,
\end{aligned}\right\}i=0,1,\cdot\cdot\cdot,N-1,&\\
&\left\|\min \left\{\mathbf{x}_N, \frac{1}{\nu}\sum_{\ell=1}^\nu[M(\xi_\ell)\mathbf{x}_N+q(\xi_\ell)]\right\}\right\|\le \epsilon, &\\
& \frac{1}{\nu}\sum_{\ell=1}^\nu g(\mathbf{x}_N,\xi_\ell)\in K^\epsilon,
\end{aligned}
\right.
\end{aligned}
\end{equation}
where $\epsilon>0$ is a sufficiently small number, $x_{d,i}=x_d(t_i)$, $u_{d,i}=u_d(t_i)$ and $f_i=f(t_{i})$ for $i\in [N]$.

\begin{theorem}\label{existence-discrete}
Suppose that the conditions of Theorem \ref{feasible-set} hold, then for any $\epsilon>0$, problem \eqref{OCTCCS-sample-time} has an optimal solution for sufficiently large $\nu$ and sufficiently small $h$ w.p.1.
\end{theorem}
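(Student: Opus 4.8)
The plan is to mirror the two-step scheme of the proof of Theorem~\ref{feasible-set-1}: first establish that the feasible set of \eqref{OCTCCS-sample-time} is nonempty for all sufficiently small $h$ and, w.p.1, all sufficiently large $\nu$, and then produce a minimizer by a finite-dimensional Weierstrass argument. The only genuinely new ingredient is a discrete-time analogue of Lemma~7.2 in \cite{FP2018}.

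\emph{Discrete controllability.} I would show that for all $h$ small enough the implicit-Euler linear control system
\[
\mathbf{x}_{i+1}-\mathbf{x}_i=hA\mathbf{x}_{i+1}+hB\mathbf{u}_{i+1},\quad C\mathbf{x}_{i+1}+D\mathbf{u}_{i+1}-f_{i+1}\le 0,\quad i=0,\dots,N-1,\quad \mathbf{x}_0=x_0,
\]
can be steered to any terminal state $\mathbf{x}_N=b\in\mathbb{R}^n$. Following \cite{FP2018}, substitute $\mathbf{u}_{i+1}=D^\dag(f_{i+1}-C\mathbf{x}_{i+1})+Y\mathbf{w}_{i+1}$ with $\mathbf{w}_{i+1}\in\mathbb{R}^{m-l}$: since $DD^\dag=I$ and $DY=0$ the inequality reduces to $0\le 0$, and, with $\bar{A}:=A-BD^\dag C$, the state recursion becomes $(I-h\bar{A})\mathbf{x}_{i+1}=\mathbf{x}_i+hBD^\dag f_{i+1}+hBY\mathbf{w}_{i+1}$. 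For $h<\|\bar{A}\|^{-1}$ put $G_h:=(I-h\bar{A})^{-1}$; unrolling the recursion from $\mathbf{x}_0=x_0$ shows that, as the $\mathbf{w}_{i+1}$ vary freely, the reachable set of $\mathbf{x}_N$ is an affine set whose direction subspace is the column span of $[\,G_hBY\ \ G_h^2BY\ \cdots\ G_h^NBY\,]$. Using $G_h-I=hG_h\bar{A}$ and the shift-invariance of Krylov subspaces, this span is all of $\mathbb{R}^n$ if and only if (provided $N\ge n$, which holds once $h\le T/n$) the matrix $[\,BY\ \ (G_h\bar{A})BY\ \cdots\ (G_h\bar{A})^{n-1}BY\,]$ has full row rank. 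As $h\downarrow0$ the latter converges entrywise to $\mathcal{R}=[\,BY\ \ \bar{A}BY\ \cdots\ \bar{A}^{n-1}BY\,]$, which has full row rank by Assumption~\ref{assumption-Y}; since full row rank is an open condition, the discrete controllability matrix has full row rank, hence $\mathbf{x}_N=b$ is attainable, for all $h$ small enough.

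\emph{Feasibility for small $h$ and large $\nu$.} Let $b=x^*(T)$ be the sparse solution of the LCP in \eqref{mixedCP} constructed in the proof of Theorem~\ref{feasible-set}, so that $\min\{x^*(T),\mathbb{E}[M(\xi)x^*(T)+q(\xi)]\}=0$ and $\mathbb{E}[g(x^*(T),\xi)]\in K\subseteq\mathrm{int}\,K^\epsilon$. Since $x^*(T)$ is deterministic, the strong law of large numbers gives, w.p.1, $\frac1\nu\sum_{\ell=1}^\nu[M(\xi_\ell)x^*(T)+q(\xi_\ell)]\to\mathbb{E}[M(\xi)x^*(T)+q(\xi)]$ and $\frac1\nu\sum_{\ell=1}^\nu g(x^*(T),\xi_\ell)\to\mathbb{E}[g(x^*(T),\xi)]$, so for all sufficiently large $\nu$ the terminal constraints of \eqref{OCTCCS-sample-time} hold at $\mathbf{x}_N=x^*(T)$. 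Combining this with the discrete controllability step and recovering $\mathbf{u}$ from the substitution above yields, for all small $h$ and w.p.1 all large $\nu$, a trajectory $\{(\mathbf{x}_i,\mathbf{u}_i)\}_{i=1}^N$ feasible for \eqref{OCTCCS-sample-time}.

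\emph{Existence of an optimal solution.} Problem \eqref{OCTCCS-sample-time} is finite-dimensional. For $h<\|A\|^{-1}$ the Euler equalities let one solve for each $\mathbf{x}_i$ as an affine function of $(\mathbf{u}_1,\dots,\mathbf{u}_N)$ and $x_0$, so after eliminating the states the feasible set is a nonempty closed subset of $\mathbb{R}^{mN}$ and the objective is a continuous function of $(\mathbf{u}_1,\dots,\mathbf{u}_N)$. Because $\mathbb{E}[F(\cdot,\xi)]$ is bounded from below, so is $\frac1\nu\sum_{\ell=1}^\nu F(\cdot,\xi_\ell)$ for large $\nu$; adding the nonnegative quadratic state term and the term $\frac{h\delta}{2}\sum_{i=1}^N\|\mathbf{u}_i-u_{d,i}\|^2$, which tends to $+\infty$ as $\|(\mathbf{u}_1,\dots,\mathbf{u}_N)\|\to\infty$, makes the objective coercive. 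A continuous, coercive function bounded from below attains its infimum on a nonempty closed set, which gives the claim; this last part parallels the proofs of Theorems~\ref{feasible-set} and~\ref{feasible-set-1}. The main obstacle is the discrete controllability result of the second paragraph — a perturbation argument showing that the discrete Kalman-type rank condition persists for small $h$ — while the remaining steps are routine.
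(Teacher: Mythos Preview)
Your proposal is correct and follows essentially the same scheme as the paper: reduce the implicit-Euler dynamics via the substitution $\mathbf{u}_{i+1}=D^\dag(p_{i+1}-C\mathbf{x}_{i+1})+Y\mathbf{w}_{i+1}$ to a discrete linear system governed by $A_h=(I-h\bar A)^{-1}$, verify the discrete Kalman rank condition for small $h$ (the paper invokes the algebraic Krylov-subspace equivalence between $A_h$ and $\bar A$ rather than your perturbation argument, but both are valid), steer to the sparse LCP solution $x^*(T)$ which satisfies the SAA terminal constraints for large $\nu$ by the strong law of large numbers, and conclude existence of an optimizer from lower-boundedness of the objective. Your explicit finite-dimensional coercivity argument is a clean substitute for the paper's appeal to Theorem~5.1 in \cite{FP2018}.
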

\begin{proof}
Theorem \ref{feasible-set-1} has shown that the solution set of \eqref{mixedCP-SAA} with any $\epsilon>0$ is nonempty for sufficiently large $\nu$ w.p.1. About the existence of feasible solution to problem \eqref{OCTCCS-sample-time}, it suffices to show that the following problem has a solution for any $b\in\mathbb{R}^n$,
\begin{equation}\label{discrete-iteration-controllability}
\left\{
\begin{aligned}
&\left.
\begin{aligned}
&\mathbf{x}_{i+1}=\mathbf{x}_i+hA \mathbf{x}_{i+1}+hB\mathbf{u}_{i+1}, &\\
& C \mathbf{x}_{i+1}+D\mathbf{u}_{i+1}-f_{i+1}\leq0,
\end{aligned}\right\}i=0,1,\cdot\cdot\cdot,N-1,&\\
&\mathbf{x}_0=x_0,\quad \quad \mathbf{x}_N=b.
\end{aligned}
\right.
\end{equation}

Firstly, denote $A_h=I-h(A-BD^\dag C)$. For sufficiently small $h$,
$A_h$ is nonsingular.
Similar with the proof of Theorem \ref{feasible-set}, from
$\mathbf{x}_{i+1}=\mathbf{x}_{i}+h(A-BD^\dag C)\mathbf{x}_{i+1} +hBD^\dag p_{i+1}$,  the following iteration with $\mathbf{x}_0= x_0$,
\[\mathbf{x}_{i+1}=A_h^{-1}(\mathbf{x}_i+hBD^\dag p_{i+1}), \quad i=0,1,\cdot\cdot\cdot,N-1\]
generates a solution $\{\bar{\mathbf{x}}_i\}_{i=1}^N$
of the system with  $\mathbf{x}_0=x_0,$
$$
\mathbf{x}_{i+1}=\mathbf{x}_i+hA \mathbf{x}_{i+1}+hB\mathbf{u}_{i+1}, \quad C \mathbf{x}_{i+1}+D\mathbf{u}_{i+1}=p_{i+1},\quad
i=0,1,\cdot\cdot\cdot,N-1,
$$
for any given $p_i\in \mathbb{R}^l$, $i=1,\dots,N.$

From Assumption \ref{assumption-Y} and the nonsingularity of $A_h$, we know that the matrix
\[\mathcal{\tilde{R}}_d:=[BY\,\,\,\,A_hBY\,\,\cdot\cdot\cdot\,\,A_h^{n-1}BY]\]
has full row rank $n$. Hence
the matrix
\[\mathcal{R}_d:=[hA_h^{-1}BY\,\,\,\,h(A_h^{-1})^2BY\,\,\cdot\cdot\cdot\,\,h(A_h^{-1})^{n}BY]\]
has full row rank $n$.
According to Theorem 3.1.1 in \cite{book-discrete-controllability}, the  system with $\mathbf{x}_0=0$,
\begin{equation*}
\left\{
\begin{aligned}
&\mathbf{x}_{i+1}=A_h^{-1}(\mathbf{x}_i+hBY {v}_{i+1}),\,\,i=0,1,\cdot\cdot\cdot,N-1,&\\
&\mathbf{x}_N=b-\bar{\mathbf{x}}_N,
\end{aligned}
\right.
\end{equation*}
admits a solution $\{\tilde{\mathbf{x}}_i,\tilde{v}_i\}_{i=1}^N$ for any $b\in\mathbb{R}^n$. Therefore, $\{\tilde{\mathbf{x}}_i+\bar{\mathbf{x}}_i,\tilde{v}_i\}_{i=1}^N$ is a solution of the following equation
\begin{equation*}
\left\{
\begin{aligned}
&\mathbf{x}_{i+1}=A_h^{-1}(\mathbf{x}_i+hBY v_{i+1}+hBD^\dag p_{i+1}),\,\,i=0,1,\cdot\cdot\cdot,N-1,&\\
&\mathbf{x}_0=x_0,\quad \quad \mathbf{x}_N=b.
\end{aligned}
\right.
\end{equation*}
Let $\tilde{\mathbf{u}}_i=Y\tilde{v}_i+D^\dag (p_i-C(\tilde{\mathbf{x}}_i+\bar{\mathbf{x}}_i))$. Then
it is easy to verify that $\{\tilde{\mathbf{x}}_i+\bar{\mathbf{x}}_i,\tilde{\mathbf{u}}_i\}_{i=1}^N$ is a solution of \eqref{discrete-iteration-controllability}
by setting $p_{i}=f_i+\tilde{p}_i$ for any $\tilde{p}_i\leq 0$.

Since $\mathbb{E}[ F(\cdot,\xi)]$ is bounded from below, we can also obtain that $\frac{1}{\nu}\sum_{\ell=1}^\nu F(\cdot,\xi_\ell)$ is also bounded from below with sufficiently large $\nu$. Similar to Theorem 5.1 in \cite{FP2018}, we can prove a minimizing sequence tends to an optimal solution of \eqref{OCTCCS-sample-time}, which shows the existence of optimal solutions to \eqref{OCTCCS-sample-time} with any $\epsilon>0$ for sufficiently large $\nu$ and sufficiently small $h$.
\end{proof}

Let $\{\mathbf{x}^{\epsilon,\nu}_i,\mathbf{u}^{\epsilon,\nu}_i\}_{i=1}^N$ be a solution of \eqref{OCTCCS-sample-time}. We define a piecewise linear function ${x}^{\epsilon,\nu}_h$ and a piecewise constant function ${u}^{\epsilon,\nu}_h$ on $[0,T]$ as below:
\begin{eqnarray}\label{feasible-interpolant}
      {x}^{\epsilon,\nu}_h(t) = \mathbf{x}^{\epsilon,\nu}_{i}+\frac{t-t_{i}}{h}(\mathbf{x}^{\epsilon,\nu}_{i+1}
   -\mathbf{x}^{\epsilon,\nu}_{i}), \quad
  {u}^{\epsilon,\nu}_{h}(t) = \mathbf{u}^{\epsilon,\nu}_{i+1},\,\,\,\,\forall\,\, t\in(t_{i},t_{i+1}].
 \end{eqnarray}
Denote $\hat{\mathcal{S}}^{\epsilon,\nu}_h$ the set of $(\hat{x}_h^{\epsilon,\nu},\hat{u}_h^{\epsilon,\nu})\in H^1(0,T)^n\times L^2(0,T)^m$, where $(\hat{x}^{\epsilon,\nu}_h,\hat{u}^{\epsilon,\nu}_h)$ are defined in \eqref{feasible-interpolant} based on an optimal solution $\{\hat{\mathbf{x}}^{\epsilon,\nu}_i,\hat{\mathbf{u}}^{\epsilon,\nu}_i\}_{i=1}^N$ of \eqref{OCTCCS-sample-time}.  Let
\[\Phi^{\nu}_h(x^{\epsilon,\nu}_h,u^{\epsilon,\nu}_h)=\frac{1}{\nu}\sum_{\ell=1}^\nu F(\mathbf{x}^{\epsilon,\nu}_N,\xi_\ell)+\frac{h}{2}\sum_{i=1}^N
\left(\|\mathbf{x}^{\epsilon,\nu}_i-x_{d,i}\|^2+{\delta}\|\mathbf{u}^{\epsilon,\nu}_i-u_{d,i}\|^2\right).\]
\begin{theorem}
 Suppose that the conditions of Theorem \ref{feasible-set} hold, then we have
\[\lim_{\epsilon\downarrow0}\lim_{\nu\rightarrow\infty}\lim_{h\downarrow0}
\mathbb{D}(\hat{\mathcal{S}}^{\epsilon,\nu}_h,\hat{\mathcal{S}})=0,\,\,\textrm{ w.p.1.}\]
\end{theorem}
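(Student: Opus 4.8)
The plan is to separate the three limits and dispatch the mesh parameter $h$ first. Since the one-sided deviation $\mathbb{D}$ satisfies the triangle inequality, for every $\epsilon>0$ and every $\nu$ large enough that $\hat{\mathcal{S}}^{\epsilon,\nu}\neq\emptyset$ (Theorem \ref{feasible-set-1}) and that $\hat{\mathcal{S}}^{\epsilon,\nu}_h\neq\emptyset$ for small $h$ (Theorem \ref{existence-discrete}), w.p.1, we have
\[\mathbb{D}(\hat{\mathcal{S}}^{\epsilon,\nu}_h,\hat{\mathcal{S}})\le \mathbb{D}(\hat{\mathcal{S}}^{\epsilon,\nu}_h,\hat{\mathcal{S}}^{\epsilon,\nu})+\mathbb{D}(\hat{\mathcal{S}}^{\epsilon,\nu},\hat{\mathcal{S}}).\]
The second term tends to $0$ after $\nu\to\infty$ and then $\epsilon\downarrow0$, by Theorem \ref{lemma-set-epi}. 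Hence it suffices to show that, for each fixed $\epsilon>0$ and fixed (large) $\nu$,
\[\lim_{h\downarrow0}\mathbb{D}(\hat{\mathcal{S}}^{\epsilon,\nu}_h,\hat{\mathcal{S}}^{\epsilon,\nu})=0\quad\textrm{w.p.1;}\]
applying $\lim_{\epsilon\downarrow0}\lim_{\nu\to\infty}\lim_{h\downarrow0}$ to the first displayed inequality and using $\mathbb{D}\ge0$ then yields the theorem.

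For the mesh limit I would reuse the time-stepping and discrete-controllability machinery already built in the proof of Theorem \ref{existence-discrete}, organised in three parts as in Theorem 5.1 of \cite{FP2018}. \emph{(a) Asymptotic feasibility of discrete solutions:} if $\{\hat{\mathbf{x}}^{\epsilon,\nu}_i,\hat{\mathbf{u}}^{\epsilon,\nu}_i\}_{i=1}^{N}$ is optimal for \eqref{OCTCCS-sample-time}, the interpolants $(\hat{x}^{\epsilon,\nu}_h,\hat{u}^{\epsilon,\nu}_h)$ of \eqref{feasible-interpolant} satisfy the continuous state equation and $Cx+Du-f\le0$ up to an $O(h)$ residual, while the relaxed complementarity inequality and $\frac{1}{\nu}\sum_{\ell=1}^{\nu}g(\cdot,\xi_\ell)\in K^\epsilon$ hold exactly at $t=T$, since these involve only $\hat{\mathbf{x}}^{\epsilon,\nu}_N=\hat{x}^{\epsilon,\nu}_h(T)$ and the relaxation has positive radius $\epsilon$; the terminal bound \eqref{xT}, Assumption \ref{assumption-Y}, and the uniform nonsingularity of $A_h=I-h(A-BD^\dag C)$ keep $\{(\hat{x}^{\epsilon,\nu}_h,\hat{u}^{\epsilon,\nu}_h)\}_h$ in a fixed bounded set whose cluster points are feasible for \eqref{OCTCCS-sample}. \emph{(b) Recovery:} conversely, given $(\hat{x},\hat{u})\in\hat{\mathcal{S}}^{\epsilon,\nu}$, solving \eqref{discrete-iteration-controllability} with terminal value $b=\hat{x}(T)$ (exactly as in Theorem \ref{existence-discrete}) produces discrete feasible points of \eqref{OCTCCS-sample-time} whose interpolants converge to $(\hat{x},\hat{u})$ in $H^1\times L^2$ as $h\downarrow0$; by strict convexity of the discrete objective in the controls $\{\mathbf{u}_i\}$ the discrete optimal trajectory is uniquely determined by and continuous in its terminal value $\mathbf{x}_N$, so, as in the proof of Theorem \ref{lemma-set-epi}, the whole analysis reduces to convergence of terminal states. \emph{(c) Epiconvergence of the objective:} the Riemann sum $\frac{h}{2}\sum_{i=1}^{N}(\|\mathbf{x}_i-x_{d,i}\|^2+\delta\|\mathbf{u}_i-u_{d,i}\|^2)$ converges to $\frac{1}{2}\|x-x_d\|_{L^2}^2+\frac{\delta}{2}\|u-u_d\|_{L^2}^2$ uniformly on the bounded set of (a), so, arguing as in Lemma \ref{epi-con-objective}, $\Phi^\nu_h\xrightarrow{epi}\Phi^\nu$ as $h\downarrow0$, with (b) giving the $\limsup$ inequality and lower semicontinuity of the squared norms the $\liminf$ inequality.

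Given (a)--(c), the convergence of minimizers follows from the same epi-convergence tool (Theorem 2.5 in \cite{Wets1981}) already used in the proof of Theorem \ref{lemma-set-epi}: every cluster point of the interpolated discrete optimal solutions is an optimal solution of \eqref{OCTCCS-sample}, and $\Phi^\nu_h(\hat{x}^{\epsilon,\nu}_h,\hat{u}^{\epsilon,\nu}_h)\to\Phi^\nu(\hat{x}^{\epsilon,\nu},\hat{u}^{\epsilon,\nu})$, hence $\lim_{h\downarrow0}\mathbb{D}(\hat{\mathcal{S}}^{\epsilon,\nu}_h,\hat{\mathcal{S}}^{\epsilon,\nu})=0$ w.p.1. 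Substituting into the triangle inequality of the first paragraph and taking $\lim_{h\downarrow0}$, then $\lim_{\nu\to\infty}$, then $\lim_{\epsilon\downarrow0}$, together with Theorem \ref{lemma-set-epi}, completes the proof.

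I expect the main obstacle to lie in steps (a) and (b) for the path-constrained linear control system, not in the terminal conditions: the inequality $Cx(t)+Du(t)-f(t)\le0$ holds only almost everywhere, the implicit-Euler control $\hat{u}^{\epsilon,\nu}_h$ is piecewise constant and feasible only at the nodes $t_{i+1}$, and the piecewise-linear state $\hat{x}^{\epsilon,\nu}_h$ injects an $O(h)$ constraint violation that must be absorbed -- e.g. by replacing $f$ with $f+\tilde{p}$, $\tilde{p}\le0$, as in the proof of Theorem \ref{feasible-set} -- while the trajectory is still steered to a prescribed terminal state through \eqref{discrete-iteration-controllability}. Quantifying this uniformly in $h$ (equivalently, the consistency/error estimate of the implicit Euler scheme, namely that the discrete solution with terminal value $b$ converges in $H^1\times L^2$ to the continuous one with the same terminal value), and doing so while $\mathcal{Z}^{\epsilon,\nu}$ is itself only nonempty w.p.1 for large $\nu$, is where the bulk of the work in Section \ref{se:time} is needed; the resulting $O(h)$ bound would also upgrade the innermost limit to a convergence rate.
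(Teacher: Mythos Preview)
Your proposal is correct and follows essentially the same route as the paper: reduce via the triangle inequality for $\mathbb{D}$ to showing $\lim_{h\downarrow0}\mathbb{D}(\hat{\mathcal{S}}^{\epsilon,\nu}_h,\hat{\mathcal{S}}^{\epsilon,\nu})=0$ for fixed $(\epsilon,\nu)$, establish epi-convergence $\Phi^\nu_h\xrightarrow{epi}\Phi^\nu$, extract a weak-$L^2$ limit of the bounded discrete controls and the corresponding $H^1$ limit of the states, then invoke Theorem~2.5 of \cite{Wets1981} together with Theorem~\ref{lemma-set-epi}. The paper's proof is terser---it cites \cite{dontchev,Morfuk2007,polak2002} for the epi-convergence step rather than spelling out your (a)--(c), and it handles the path constraint and the recovery step only implicitly through those references and through weak-$L^2$ compactness of $\{\hat{u}^{\epsilon,\nu}_h\}$; the quantitative $O(h)$ analysis you anticipate in your final paragraph is indeed carried out, but separately, in Lemmas~\ref{error-discrete-continuous}--\ref{error-values} and Theorem~\ref{error-optimal-values}.
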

\begin{proof}
Following \cite{dontchev,Morfuk2007,polak2002} and the references therein, we can obtain that $\Phi^{\nu}_h$ epiconverges to $\Phi^{\nu}$ as $h\downarrow  0$  over a compact subset of $ H^1(0,T)^n\times L^2(0,T)^m$.

Let $\{\hat{\mathbf{x}}^{\epsilon,\nu}_i,\hat{\mathbf{u}}^{\epsilon,\nu}_i\}_{i=1}^N$ be an optimal solution of \eqref{OCTCCS-sample-time}, which means the boundedness of $\{\hat{u}^{\epsilon,\nu}_{h_k}\}_{k=1}^\infty\subseteq L^2(0,T)^m$. Since $L^2(0,T)^m$ is reflexive, there is a subsequence of $\{\hat{u}^{\epsilon,\nu}_{h_k}\}$, which we may assume without loss of generality to be $\{\hat{u}^{\epsilon,\nu}_{h_k}\}$ itself,  having a weak limit $\hat{u}^{\epsilon,\nu}_*\in L^2(0,T)^m$.
It is easy to see that $(\hat{x}^{\epsilon,\nu}_{h_k},\hat{u}^{\epsilon,\nu}_{h_k})$ satisfies the differential equation $\dot{x}^{\epsilon,\nu}_{h_k}(t)=A\mathbf{x}^{\epsilon,\nu}_{i+1}+Bu^{\epsilon,\nu}_{h_k}(t)$ for a.e. $t\in(t_i,t_{i+1})$ with some $i\in [N]$.
Therefore, there is $\hat{x}^{\epsilon,\nu}_*\in H^1(0,T)^n$ such that $\hat{x}^{\epsilon,\nu}_h\rightarrow \hat{x}^{\epsilon,\nu}_*$ in $H^1(0,T)^n$ by $\hat{u}^{\epsilon,\nu}_h\rightarrow\hat{u}^{\epsilon,\nu}_*$ in $ L^2(0,T)^m$. According to Theorem 2.5 in \cite{Wets1981}, we can obtain $\lim_{h\downarrow0}\mathbb{D}(\hat{\mathcal{S}}^{\epsilon,\nu}_h,\hat{\mathcal{S}}^{\epsilon,\nu})=0$ with some $\epsilon>0$ and sufficiently large $\nu$ and then $\lim_{\epsilon\downarrow0}\lim_{\nu\rightarrow\infty}\lim_{h\downarrow0}
\mathbb{D}(\hat{\mathcal{S}}^{\epsilon,\nu}_h,\hat{\mathcal{S}})=0$ w.p.1.
\end{proof}

\subsection{Error estimates of optimal values of problem \eqref{OCTCCS-sample-time} to problem \eqref{OCTCCS-sample}}

In this subsection,
 we investigate the Euler approximation of problem \eqref{OCTCCS-sample}.
 Our results are related to the Euler approximation of the optimal control problem with two-point differential system \cite[Theorem 5]{Dontchev-1994}, which requires the convexity of the terminal set. However, the terminal constraint set $\mathcal{Z}^{\epsilon,\nu}$ in \eqref{OCTCCS-sample} is generally nonconvex due to the existence of the complementarity constraints.

\begin{lemma}\label{error-discrete-continuous}
Suppose that the conditions of Theorem \ref{feasible-set} hold.
Let $({x}^{\epsilon,\nu}_h,{u}^{\epsilon,\nu}_h)$ be defined in \eqref{feasible-interpolant} by a feasible solution $\{\mathbf{x}^{\epsilon,\nu}_i,\mathbf{u}^{\epsilon,\nu}_i\}_{i=1}^N$ of  \eqref{OCTCCS-sample-time}. Then, for sufficiently small $h$, there is a feasible solution $(x^{\epsilon,\nu},u^{\epsilon,\nu})$ of problem \eqref{OCTCCS-sample} such that
\[\|x^{\epsilon,\nu}-{x}^{\epsilon,\nu}_h\|_{L^2}\leq \sqrt{T}C_y h,\,\,\,\|x^{\epsilon,\nu}-{x}^{\epsilon,\nu}_h\|_{H^1}\leq \sqrt{1+T}C_y h,\,\,\,\|u^{\epsilon,\nu}-{u}^{\epsilon,\nu}_h\|_{L^2}\leq C_uh,\]
where $C_y$ and $C_u$ are two constants independent of $h$.
\end{lemma}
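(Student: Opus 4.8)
The plan is to construct the continuous feasible solution $(x^{\epsilon,\nu},u^{\epsilon,\nu})$ of \eqref{OCTCCS-sample} by solving the continuous two-point boundary value problem \eqref{ODE-inequality} with the same terminal point $b=\mathbf{x}^{\epsilon,\nu}_N$ that the discrete trajectory $\{\mathbf{x}^{\epsilon,\nu}_i,\mathbf{u}^{\epsilon,\nu}_i\}_{i=1}^N$ reaches, and then bound the $L^2$/$H^1$ discrepancy between the continuous solution and its piecewise-linear interpolant. First I would observe that since $\{\mathbf{x}^{\epsilon,\nu}_i,\mathbf{u}^{\epsilon,\nu}_i\}_{i=1}^N$ is feasible for \eqref{OCTCCS-sample-time}, its terminal value $\mathbf{x}^{\epsilon,\nu}_N$ automatically satisfies the relaxed complementarity condition $\|\min\{\mathbf{x}^{\epsilon,\nu}_N,\frac1\nu\sum_\ell[M(\xi_\ell)\mathbf{x}^{\epsilon,\nu}_N+q(\xi_\ell)]\}\|\le\epsilon$ and $\frac1\nu\sum_\ell g(\mathbf{x}^{\epsilon,\nu}_N,\xi_\ell)\in K^\epsilon$; hence $\mathbf{x}^{\epsilon,\nu}_N\in\mathcal{Z}^{\epsilon,\nu}$. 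By the argument in the proof of Theorem~\ref{feasible-set} (invoking Lemma 7.2 of \cite{FP2018} under Assumption~\ref{assumption-Y}), the continuous system \eqref{ODE-inequality} with $x(0)=x_0$ and $x(T)=\mathbf{x}^{\epsilon,\nu}_N$ admits a solution $(x^{\epsilon,\nu},u^{\epsilon,\nu})\in H^1(0,T)^n\times L^2(0,T)^m$, and this pair is feasible for \eqref{OCTCCS-sample} since it meets the ODE, the inequality $Cx+Du-f\le0$, and the (unchanged) terminal constraints.

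The next step is the quantitative comparison. I would write the classical Euler local truncation estimate: on each subinterval $(t_i,t_{i+1})$ the interpolant $x^{\epsilon,\nu}_h$ has constant derivative $(\mathbf{x}^{\epsilon,\nu}_{i+1}-\mathbf{x}^{\epsilon,\nu}_i)/h = A\mathbf{x}^{\epsilon,\nu}_{i+1}+B\mathbf{u}^{\epsilon,\nu}_{i+1}$, whereas $x^{\epsilon,\nu}$ solves $\dot x^{\epsilon,\nu}=Ax^{\epsilon,\nu}+Bu^{\epsilon,\nu}$. Since the implicit Euler scheme for the linear ODE is first-order accurate and the solution $(x^{\epsilon,\nu},u^{\epsilon,\nu})$ together with the discrete data is uniformly bounded (the terminal value lies in the compact set $\mathcal{X}$ from the proof of Theorem~\ref{feasible-set}, and the map $x(T)\mapsto(\|x\|_{H^1},\|u\|_{L^2})$ is continuous, so everything stays in a fixed bounded set independent of $h$), a Grönwall/discrete-Grönwall argument applied to $e_i:=x^{\epsilon,\nu}(t_i)-\mathbf{x}^{\epsilon,\nu}_i$ yields $\max_i\|e_i\|\le C_0 h$ for a constant $C_0$ independent of $h$. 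Interpolating and using $\|\dot x^{\epsilon,\nu}-\dot x^{\epsilon,\nu}_h\|_{L^2}\le C_1 h$ gives the stated bounds with $C_y$ absorbing $C_0,C_1$, and $\|u^{\epsilon,\nu}-u^{\epsilon,\nu}_h\|_{L^2}\le C_u h$ follows from expressing $u$ through $\dot x, x$ (using $DY=0$, the pseudoinverse $D^\dag$, and the controllability representation), all of which are Lipschitz in the trajectory.

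The main obstacle I anticipate is not the ODE error estimate itself — that is standard — but rather ensuring that the \emph{same} terminal point can be hit by a continuous feasible pair whose norm is controlled independently of $h$, i.e.\ that the constant $C_y$ genuinely does not blow up as $h\downarrow0$. This requires that the solution operator of \eqref{ODE-4} sending $(x_0,b,p)$ to $(x,u)$ be bounded with a bound depending only on the (compact) range of admissible $b$ and the fixed data $A,B,C,D,f$, which is exactly what the full-row-rank controllability matrix $\mathcal{R}$ in Assumption~\ref{assumption-Y} provides (the controllability Gramian is boundedly invertible on a fixed horizon $T$). A secondary technical point is matching the control: the discrete $\mathbf{u}^{\epsilon,\nu}_{i+1}$ need not be the nodal values of $u^{\epsilon,\nu}$, so the $L^2$ bound on $u^{\epsilon,\nu}-u^{\epsilon,\nu}_h$ must be obtained by re-deriving the control from the (close) states via the formula $u=Yv+D^\dag(p-Cx)$ used in the proof of Theorem~\ref{existence-discrete}, and then estimating $\|v^{\epsilon,\nu}-v^{\epsilon,\nu}_h\|_{L^2}$ through the boundedly invertible reachability map; this is where the constant $C_u$ comes from.
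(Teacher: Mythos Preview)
Your proposal has a genuine gap in the second step. You obtain $(x^{\epsilon,\nu},u^{\epsilon,\nu})$ by invoking the abstract existence result (Lemma~7.2 of \cite{FP2018}) for the two-point problem \eqref{ODE-inequality} with terminal value $b=\mathbf{x}^{\epsilon,\nu}_N$, and then propose a Gr\"onwall argument on $e_i=x^{\epsilon,\nu}(t_i)-\mathbf{x}^{\epsilon,\nu}_i$. But the increment of $e_i$ contains the term $hB\bigl(u^{\epsilon,\nu}(\cdot)-\mathbf{u}^{\epsilon,\nu}_{i+1}\bigr)$, and nothing in your construction forces the continuous control $u^{\epsilon,\nu}$ to be close to the piecewise constant $u^{\epsilon,\nu}_h$. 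An arbitrary continuous feasible pair sharing only $x(0)=x_0$ and $x(T)=\mathbf{x}^{\epsilon,\nu}_N$ can sit $O(1)$ away from the discrete interpolant, so Gr\"onwall cannot start. The circularity persists in your ``secondary technical point'': you plan to deduce control closeness from state closeness, but the state estimate already presupposes control closeness. Expressing $u$ through $\dot x$ and $x$ is also not available, since $B\in\mathbb{R}^{n\times m}$ is not invertible in general.

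The paper avoids this by \emph{constructing} the continuous control directly from the discrete data rather than appealing to abstract existence. Writing $\mathbf{u}^{\epsilon,\nu}_i=Yv_i+D^\dag(f_i+\tilde p_i-C\mathbf{x}^{\epsilon,\nu}_i)$ with $\tilde p_i\le0$, it defines on each $(t_i,t_{i+1}]$ the continuous control $u^{\epsilon,\nu}(t)=Y\bigl(v_{i+1}+a_{i+1}(t-t_i)\bigr)+D^\dag\bigl(\tilde p_{i+1}+f(t)-Cx^{\epsilon,\nu}(t)\bigr)$, where the correction vectors $a_{i+1}\in\mathbb{R}^{m-l}$ are chosen (via the full-rank controllability matrix $\mathcal{R}$) so that $x^{\epsilon,\nu}(T)=\mathbf{x}^{\epsilon,\nu}_N$ exactly. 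Because the correction lies in $\mathrm{ran}\,Y=\ker D$, the inequality $Cx^{\epsilon,\nu}+Du^{\epsilon,\nu}-f=\tilde p_{i+1}\le0$ is preserved, and because the piecewise constant data $(v_{i+1},\tilde p_{i+1})$ are reused verbatim, the difference $\dot x^{\epsilon,\nu}-\dot x^{\epsilon,\nu}_h$ reduces to an explicit residual $y(t)$ of size $O(h)$. The $L^2$, $H^1$, and control bounds then follow by direct integration, with $\max_i\|a_i\|$ absorbed into $C_y,C_u$. Your outline would become correct if you replaced the abstract existence step with this explicit construction.
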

\begin{proof}
We denote two positive constants $\theta_x$ and $\theta_u$ such that $\max_{i\in[N]}\|\mathbf{x}^{\epsilon,\nu}_i\|\leq \theta_x$ and $\max_{i\in[N]}\|\mathbf{u}^{\epsilon,\nu}_i\|\leq \theta_u$.
According to Theorem \ref{existence-discrete},   there are $v_i\in \mathbb{R}^{m-l}$ and $\tilde{p}_i\leq 0$ such that $\mathbf{u}^{\epsilon,\nu}_i=Y{v}_i+D^\dag (f_i+\tilde{p}_i-C\mathbf{x}^{\epsilon,\nu}_i)$ for $i\in [N].$ Let $x^{\epsilon,\nu}(t)$ be the solution of the following system, for $t\in(t_i,t_{i+1}]$,
\begin{equation*}
\left\{
\begin{aligned}
& \dot{x}^{\epsilon,\nu}(t)=(A-BD^\dag C)x^{\epsilon,\nu}(t)+BY({v}_{i+1}+{a_{i+1}(t-t_i)})+BD^\dag (\tilde{p}_{i+1}+f(t)),&\\
&  {x}^{\epsilon,\nu}(0)=x_0,\,\,\,x^{\epsilon,\nu}(T)=\mathbf{x}^{\epsilon,\nu}_N,
\end{aligned}
\right.
\end{equation*}
where $\{a_i\}_{i=1}^{N}\subset \mathbb{R}^{m-l}$ fulfills
\begin{eqnarray*}
& &\mathbf{x}^{\epsilon,\nu}_N=e^{(A-BD^\dag C)T}x_0+\sum_{i=0}^{N-1}\left[\int_{t_i}^{t_{i+1}}e^{(A-BD^\dag C)(T-\tau)}d\tau B(Yv_{i+1}+D^\dag \tilde{p}_{i+1})\right.\\
& &\left.+\int_{t_i}^{t_{i+1}}e^{(A-BD^\dag C)(T-\tau)}BD^\dag f(\tau)d\tau +\int_{t_i}^{t_{i+1}}e^{(A-BD^\dag C)(T-\tau)}BYa_{i+1}(\tau-t_i)d\tau\right].
\end{eqnarray*}
In addition, we know that $x_h^{\epsilon,\nu}$ solves the differential equation, for any $t\in(t_i,t_{i+1}]$,
\begin{eqnarray*}
\left\{
\begin{aligned}
&\dot{x}^{\epsilon,\nu}_h(t)=(A-BD^\dag C)x^{\epsilon,\nu}_h(t)+BY({v}_{i+1}+a_{i+1}(t-t_i))+BD^\dag (\tilde{p}_{i+1}+f(t))+y(t),&\\
& {x}^{\epsilon,\nu}_h(0)=x_0,\,\,\,x^{\epsilon,\nu}_h(T)=\mathbf{x}^{\epsilon,\nu}_N,
\end{aligned}
\right.
\end{eqnarray*}
where $y(t)=(A-BD^\dag C)(\mathbf{x}^{\epsilon,\nu}_{i+1}-x_h^{\epsilon,\nu}(t))-BYa_{i+1}(t-t_i)-BD^\dag(f(t)-f_{i+1})$.
Since $f\in L^2(0,T)^l$, there is a $h_0>0$ such that $\|f(t)-f_{i+1}\|\leq h$ for any $h\in(0,h_0]$ and a.e. $t\in(t_i,t_{i+1}]$.
Let $\tilde{f}(t)=f_{i+1}$ for $t\in(t_i,t_{i+1}]$, we then have $\|f-\tilde{f}\|_{L^2}\leq\sqrt{T}h$.
It means that $\|y\|_{L^2} \leq  C_yh$ for any $h\in(0,h_0]$, where $$C_y=\|A-BD^\dag C\|(\|A\|\theta_x+\|B\|\theta_u)\sqrt{T}+\max_{i\in[N]}\|a_i\|\|BY\|\sqrt{\frac{T}{3}}+\|BD^\dag\|\sqrt{T}.$$
Therefore, we have, for any $t\in(t_i,t_{i+1}]$,
\[\|x^{\epsilon,\nu}-{x}^{\epsilon,\nu}_h\|^2_{L^2}\leq \int_0^T\int_0^t\|\dot{x}^{\epsilon,\nu}(\tau)-\dot{x}^{\epsilon,\nu}_h(\tau)\|^2d\tau dt= \int_0^T\int_0^t\|y(\tau)\|^2d\tau dt\leq \|y\|_{L^2}^2 T.\]
Hence, according to the definition of $\|\cdot\|_{H^1}$, we obtain that
\[\|x^{\epsilon,\nu}-{x}^{\epsilon,\nu}_h\|_{H^1}\leq \sqrt{1+T}\|y\|_{L^2}\leq \sqrt{1+T}C_y h.\]

Let $u^{\epsilon,\nu}(t)=Y(v_{i+1}+a_{i+1}(t-t_i))+D^\dag(\tilde{p}_{i+1}+f(t)-Cx^{\epsilon,\nu}(t))$ for any $t\in(t_i,t_{i+1}]$. It is clear that $u^{\epsilon,\nu}_h(t)=\mathbf{u}^{\epsilon,\nu}_{i+1}=Yv_{i+1}+D^\dag(f_{i+1}
+\tilde{p}_{i+1}-C\mathbf{x}^{\epsilon,\nu}_{i+1})$ for any $t\in(t_i,t_{i+1}]$. Then we have $\|u^{\epsilon,\nu}-u^{\epsilon,\nu}_h\|_{L^2}\leq C_u h,$
where $$C_u=\max_{i\in[N]}\|a_i\|\|Y\|\sqrt{\frac{T}{3}}+\sqrt{T}\|D^\dag\|+\|D^\dag C\|(C_y+\|A\|\theta_x+\|B\|\theta_u)\sqrt{T}.$$

Clearly, according to the definition of $u^\nu(t)$, we can obtain that for any $t\in(t_i,t_{i+1}]$, \[Cx^{\epsilon,\nu}(t)+Du^{\epsilon,\nu}(t)-f(t)=\tilde{p}_{i+1}\leq 0,\]
which shows that $(x^{\epsilon,\nu},u^{\epsilon,\nu})$ is a feasible solution of problem \eqref{OCTCCS-sample}.
\end{proof}

\begin{lemma}\label{error-continuous-discrete}
Suppose that the conditions of Theorem \ref{feasible-set} hold.
Let $(x^{\epsilon,\nu},u^{\epsilon,\nu})$ be a feasible solution of problem \eqref{OCTCCS-sample} with $\|x^{\epsilon,\nu}(t)\|\leq \theta'_x$ and $\|u^{\epsilon,\nu}(t)\|\leq \theta'_u$ for a.e. $t\in[0,T]$, where $\theta'_x$ and $\theta'_u$ are two positive constants. Then, for sufficiently small $h$, there is $(x^{\epsilon,\nu}_h,{u}^{\epsilon,\nu}_h)$ defined in \eqref{feasible-interpolant} by a feasible solution $\{\mathbf{x}^{\epsilon,\nu}_i,\mathbf{u}^{\epsilon,\nu}_i\}_{i=1}^N$ of \eqref{OCTCCS-sample-time}, such that
\[\|x^{\epsilon,\nu}-{x}^{\epsilon,\nu}_h\|_{L^2}\leq \sqrt{T}\tilde{C}_y h,\,\,\,\|x^{\epsilon,\nu}-{x}^{\epsilon,\nu}_h\|_{H^1}\leq \sqrt{1+T}\tilde{C}_y h,\,\,\,\|u^{\epsilon,\nu}-{u}^{\epsilon,\nu}_h\|_{L^2}\leq \tilde{C}_uh,\]
where $\tilde{C}_y$ and $\tilde{C}_u$ are two positive constants independent of $h$.
\end{lemma}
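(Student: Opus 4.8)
The plan is to run the argument of Lemma~\ref{error-discrete-continuous} in the reverse direction: starting from a continuous feasible pair $(x^{\epsilon,\nu},u^{\epsilon,\nu})$ of \eqref{OCTCCS-sample}, I would construct a discrete feasible pair $\{\mathbf{x}^{\epsilon,\nu}_i,\mathbf{u}^{\epsilon,\nu}_i\}_{i=1}^N$ of \eqref{OCTCCS-sample-time} whose linear/piecewise-constant interpolant is $O(h)$-close to the given continuous solution. First I would use Assumption~\ref{assumption-Y} to write $u^{\epsilon,\nu}(t)=Yv(t)+D^\dag(f(t)+\tilde p(t)-Cx^{\epsilon,\nu}(t))$ with $\tilde p(t)=Cx^{\epsilon,\nu}(t)+Du^{\epsilon,\nu}(t)-f(t)\leq 0$ a.e., where $v\in L^2(0,T)^{m-l}$; this is the same decomposition used in the proof of Lemma~\ref{error-discrete-continuous} but now read off from the continuous solution. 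Then I would set the nodal states by sampling, $\mathbf{x}^{\epsilon,\nu}_i=x^{\epsilon,\nu}(t_i)$, keep the terminal node exactly, i.e.\ $\mathbf{x}^{\epsilon,\nu}_N=x^{\epsilon,\nu}(T)$, so that the complementarity-type terminal constraints $\|\min\{\mathbf{x}_N,\tfrac1\nu\sum_\ell[M(\xi_\ell)\mathbf{x}_N+q(\xi_\ell)]\}\|\le\epsilon$ and $\tfrac1\nu\sum_\ell g(\mathbf{x}_N,\xi_\ell)\in K^\epsilon$ hold automatically, and then solve for the nodal controls $\mathbf{u}^{\epsilon,\nu}_{i+1}$ from the implicit Euler relation $\mathbf{x}_{i+1}-\mathbf{x}_i=hA\mathbf{x}_{i+1}+hB\mathbf{u}_{i+1}$ together with the full-rank controllability structure $\mathcal{R}_d$ established in the proof of Theorem~\ref{existence-discrete}.

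The second block of the argument is the error estimate. I would compare $x^{\epsilon,\nu}$ and its interpolant $x^{\epsilon,\nu}_h$ by writing down the ODE each satisfies on a subinterval $(t_i,t_{i+1}]$: $x^{\epsilon,\nu}$ solves $\dot x^{\epsilon,\nu}=(A-BD^\dag C)x^{\epsilon,\nu}+BYv+BD^\dag(\tilde p+f)$, while $x^{\epsilon,\nu}_h$ solves the same equation with nodal (piecewise-constant) data plus a defect term $y(t)$ collecting (a) the difference $(A-BD^\dag C)(\mathbf{x}^{\epsilon,\nu}_{i+1}-x^{\epsilon,\nu}_h(t))$, (b) the oscillation $f(t)-f_{i+1}$, and (c) the oscillation of $v$ and of $\tilde p$ over the subinterval. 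Using $f\in L^2$ gives $\|f-\tilde f\|_{L^2}\le\sqrt{T}h$ as in Lemma~\ref{error-discrete-continuous}; using the a priori bounds $\|x^{\epsilon,\nu}(t)\|\le\theta'_x$, $\|u^{\epsilon,\nu}(t)\|\le\theta'_u$ (hence a bound on $\|v(t)\|$ and on $\|\dot x^{\epsilon,\nu}(t)\|$) controls the remaining pieces, yielding $\|y\|_{L^2}\le\tilde C_y h$ with $\tilde C_y$ built from $\|A-BD^\dag C\|$, $\|B\|$, $\|BY\|$, $\|BD^\dag\|$, $\theta'_x$, $\theta'_u$ and $T$. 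Integrating $\|x^{\epsilon,\nu}-x^{\epsilon,\nu}_h\|_{L^2}^2\le\int_0^T\!\int_0^t\|\dot x^{\epsilon,\nu}(\tau)-\dot x^{\epsilon,\nu}_h(\tau)\|^2\,d\tau\,dt=\int_0^T\!\int_0^t\|y(\tau)\|^2\,d\tau\,dt\le T\|y\|_{L^2}^2$ gives the $L^2$ and $H^1$ bounds, and the control bound $\|u^{\epsilon,\nu}-u^{\epsilon,\nu}_h\|_{L^2}\le\tilde C_u h$ follows from $u^{\epsilon,\nu}_h(t)=Y\mathbf{v}_{i+1}+D^\dag(f_{i+1}+\tilde p_{i+1}-C\mathbf{x}^{\epsilon,\nu}_{i+1})$ and the state estimate, exactly paralleling Lemma~\ref{error-discrete-continuous}.

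The main obstacle is verifying that the pair $\{\mathbf{x}^{\epsilon,\nu}_i,\mathbf{u}^{\epsilon,\nu}_i\}$ I construct is genuinely \emph{feasible} for \eqref{OCTCCS-sample-time}, not merely close to the continuous solution. Fixing $\mathbf{x}^{\epsilon,\nu}_i=x^{\epsilon,\nu}(t_i)$ forces the implicit-Euler equation $(I-hA)\mathbf{x}_{i+1}=\mathbf{x}_i+hB\mathbf{u}_{i+1}$ to be solved for $\mathbf{u}_{i+1}$, which need not have a solution with the required inequality $C\mathbf{x}_{i+1}+D\mathbf{u}_{i+1}-f_{i+1}\le0$; so instead I expect to \emph{perturb} the nodal states slightly — solving the discrete two-point boundary value problem \eqref{discrete-iteration-controllability} with $b=x^{\epsilon,\nu}(T)$ via the controllability matrix $\mathcal{R}_d$ and a nonpositive slack $\tilde p_i=\tilde p(t_i)$ exactly as in the proof of Theorem~\ref{existence-discrete} — and then show this discrete solution stays within $O(h)$ of the sampled continuous solution by a discrete Gronwall estimate on $\|\mathbf{x}^{\epsilon,\nu}_i-x^{\epsilon,\nu}(t_i)\|$, using nonsingularity of $A_h=I-h(A-BD^\dag C)$ for small $h$ and the uniform bound $\|\mathcal{R}_d^\dag\|$. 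Reconciling the requirement "$\mathbf{x}_N=x^{\epsilon,\nu}(T)$ exactly" (needed to inherit the terminal complementarity constraints) with "$\mathbf{x}_i$ close to $x^{\epsilon,\nu}(t_i)$ for all $i$" (needed for the $O(h)$ error) is the delicate point, and is handled precisely because \eqref{discrete-iteration-controllability} admits a solution hitting the prescribed endpoint $b$ while the controllability matrix gives a bounded control achieving it; the constants $\tilde C_y,\tilde C_u$ absorb $\|\mathcal{R}_d^\dag\|$ and the bounds $\theta'_x,\theta'_u$.
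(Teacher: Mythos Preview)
Your overall strategy matches the paper's: decompose $u^{\epsilon,\nu}=Yv+D^\dag(f+\tilde p-Cx^{\epsilon,\nu})$, build a discrete pair that hits $\mathbf{x}_N=x^{\epsilon,\nu}(T)$ exactly (so the terminal constraints transfer for free), and estimate the difference via an ODE--defect argument identical in form to Lemma~\ref{error-discrete-continuous}. Two points where your construction differs from the paper are worth flagging.

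First, the paper does \emph{not} sample $\tilde p$ and $v$ at nodes; it takes the subinterval averages
\[
\varphi_{i+1}=\tfrac1h\!\int_{t_i}^{t_{i+1}}\!v(\tau)\,d\tau,\qquad
\tilde\varphi_{i+1}=\tfrac1h\!\int_{t_i}^{t_{i+1}}\!\tilde p(\tau)\,d\tau,
\]
and then iterates $\mathbf{x}_{i+1}=A_h^{-1}\bigl(\mathbf{x}_i+hBY(\varphi_{i+1}+a_{i+1}h)+hBD^\dag(\tilde\varphi_{i+1}+f_{i+1})\bigr)$, choosing $\{a_i\}$ so that $\mathbf{x}_N=x^{\epsilon,\nu}(T)$. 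Averaging, not sampling, is what makes the construction clean: point values of an $L^2$ function are not well defined, and more importantly the inequality $\tilde p(t)\le 0$ a.e.\ immediately gives $\tilde\varphi_{i+1}\le 0$, which is exactly what you need for discrete feasibility $C\mathbf{x}_{i+1}+D\mathbf{u}_{i+1}-f_{i+1}=\tilde\varphi_{i+1}\le 0$. Your choice $\tilde p_i=\tilde p(t_i)$ would require an extra argument (or a continuous representative) to secure nonpositivity.

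Second, the paper does not invoke a discrete Gronwall step on $\|\mathbf{x}_i-x^{\epsilon,\nu}(t_i)\|$. Once the discrete states are generated as above, it writes the \emph{interpolant} $x^{\epsilon,\nu}_h$ as a solution of the same ODE as $x^{\epsilon,\nu}$ plus a defect $\tilde y(t)$ that collects $(A-BD^\dag C)(\mathbf{x}_{i+1}-x^{\epsilon,\nu}_h(t))$, $BY(\varphi_{i+1}+a_{i+1}h-v(t))$, and $BD^\dag(\tilde\varphi_{i+1}+f_{i+1}-\tilde p(t)-f(t))$, bounds $\|\tilde y\|_{L^2}\le\tilde C_y h$, and integrates exactly as you describe. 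Your abstract appeal to $\mathcal{R}_d$ and the two-point discrete BVP is the same mechanism as the explicit $a_{i+1}h$ correction; the paper just makes it concrete so that the $O(h)$ size of the correction is visible in $\tilde C_y,\tilde C_u$.
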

\begin{proof}
Let $(x^{\epsilon,\nu},u^{\epsilon,\nu})\in H^1(0,T)^n\times L^2(0,T)^m$ be a feasible solution of problem \eqref{OCTCCS-sample}, then there are $v\in L^2(0,T)^{m-l}$ and $\tilde{p}\in L^2(0,T)^l$ with $\tilde{p}(t)\leq 0$ for a.e. $t\in[0,T]$ such that $u^{\epsilon,\nu}(t)=Yv(t)+D^\dag (\tilde{p}(t)+f(t)-Cx^{\epsilon,\nu}(t))$. In addition, there are $h_1>0$ and a piecewise constant function $\varphi_v(t)=\frac{1}{h}\int_{t_i}^{t_{i+1}}v(\tau)d\tau:={\varphi}_{i+1}$ for any $t\in(t_i,t_{i+1}]$ such that $\|v(t)-\varphi_v(t)\|\leq h$ for a.e. $t\in(t_i,t_{i+1}]$ with $h\in(0,h_1]$. There are also $h_2>0$ and a piecewise constant function $\varphi_p(t)=\frac{1}{h}\int_{t_i}^{t_{i+1}}\tilde{p}(\tau)d\tau:=\tilde{\varphi}_{i+1}$ for any $t\in(t_i,t_{i+1}]$ such that $\|\tilde{p}(t)-\varphi_p(t)\|\leq h$ with $h\in(0,h_2]$ and $\varphi_p(t)\leq0$ for a.e. $t\in(t_i,t_{i+1}]$.

Recall $A_h= I-h(A-BD^\dag C)$.  For $i=0,1,\cdot\cdot\cdot,N-1$, let $\mathbf{x}^{\epsilon,\nu}_{0}=x_0$ and
\begin{equation*}
\begin{aligned}
\mathbf{x}^{\epsilon,\nu}_{i+1}=A_h^{-1}(\mathbf{x}^{\epsilon,\nu}_{i}+hBY(\varphi_{i+1}
+a_{i+1}h)+hBD^\dag(\tilde{\varphi}_{i+1}+f_{i+1})),
\end{aligned}
\end{equation*}
 where  $\{a_i\}_{i=1}^{N}\subset \mathbb{R}^{m-l}$  fulfills
\[x^{\epsilon,\nu}(T)=A_h^{-N}x_0+h\sum_{i=0}^{N-1} A_h^{-(i+1)}[BY(\varphi_{i+1}+a_{i+1}h)+hBD^\dag(\tilde{\varphi}_{i+1}+f_{i+1})].\]

Let $\mathbf{u}^{\epsilon,\nu}_{i}=Y(\varphi_{i}+a_{i}h)+D^\dag(\tilde{\varphi}_{i}+f_{i}-C \mathbf{x}^{\epsilon,\nu}_{i})$ for $i\in [N]$.
Since $\|x^{\epsilon,\nu}(t)\|\leq \theta'_x$ and $\|u^{\epsilon,\nu}(t)\|\leq \theta'_u$ for a.e. $t\in[0,T]$, there is a partition to $[0,T]$ such that the sequences
$\{Y\varphi_{i},D^\dag\tilde{\varphi}_{i}\}_{i=1}^{N}$ and $\{\mathbf{x}^{\epsilon,\nu}_i,\mathbf{u}^{\epsilon,\nu}_i\}_{i=1}^N$ are also bounded for any given $N$. We denote that $\tilde{\theta}_x$ and $\tilde{\theta}_u$ are two positive constants such that $\max_{i\in[N]}\|\mathbf{x}^{\epsilon,\nu}_i\|\leq \tilde{\theta}_x$ and $\max_{i\in[N]}\|\mathbf{u}^{\epsilon,\nu}_i\|\leq \tilde{\theta}_u$.
 It is clear that $x^{\epsilon,\nu}_h$ satisfies,
\[\dot{x}^{\epsilon,\nu}_h(t)=(A-BD^\dag C)x^{\epsilon,\nu}_h(t)+BY v(t)+BD^\dag (\tilde{p}(t)+f(t))+\tilde{y}(t),\,\,\,t\in(t_i,t_{i+1}],\]
where $\tilde{y}(t)=(A-BD^\dag C)(\mathbf{x}^{\epsilon,\nu}_{i+1}-x^{\epsilon,\nu}_h(t))+BY(\varphi_{i+1}+a_{i+1}h-v(t))+BD^\dag (\tilde{\varphi}_{i+1}+f_{i+1}-\tilde{p}(t)-f(t))$. It means that $\|\tilde{y}\|_{L^2}\leq \tilde{C}_yh $ for any $h\in(0,\min\{h_0,h_1,h_2\}]$, where
$$\tilde{C}_y=\|A-BD^\dag C\|(\|A\|\tilde{\theta}_x+\|B\|\tilde{\theta}_u)\sqrt{T}+\|BY\|(\sqrt{T}
+\max_{i\in[N]}\|a_i\|)+2\|BD^\dag \|\sqrt{T}.$$
Hence  $\|x^{\epsilon,\nu}-{x}^{\epsilon,\nu}_h\|_{L^2}\leq\sqrt{T}\|\tilde{y}\|_{L^2}\leq \sqrt{T}\tilde{C}_y h$ and $\|x^{\epsilon,\nu}-{x}^{\epsilon,\nu}_h\|_{H^1}\leq \sqrt{1+T}\tilde{C}_y h.$
Moreover, we have
$\|u^{\epsilon,\nu}-u^{\epsilon,\nu}_h\|_{L^2}\leq \tilde{C}_uh,$
where
$$\tilde{C}_u=\|Y\|(\sqrt{T}+\max_{i\in [N]}\|a_i\|)+\|C\|\sqrt{T}(\|A\|\tilde{\theta}_x+\|B\|\tilde{\theta}_u+\tilde{C}_y)+2\|D^\dag\| \sqrt{T}.$$

Obviously, from the definition of $\mathbf{u}^{\epsilon,\nu}_{i}$, we get $C \mathbf{x}^{\epsilon,\nu}_{i}+D\mathbf{u}^{\epsilon,\nu}_{i}-f_{i}=\tilde{\varphi}_{i}\leq0$  $(i\in [N])$, which means that $\{\mathbf{x}^{\epsilon,\nu}_i,\mathbf{u}^{\epsilon,\nu}_i\}_{i=1}^N$ is a feasible solution of \eqref{OCTCCS-sample-time}.
\end{proof}

\begin{lemma}\label{error-values}
Suppose that the conditions of Theorem \ref{feasible-set} hold.
Let $\{\mathbf{x}^{\epsilon,\nu}_i,\mathbf{u}^{\epsilon,\nu}_i\}_{i=1}^N$ be a feasible solution of \eqref{OCTCCS-sample-time} with
$\max_{i\in[N]}\|\mathbf{x}^{\epsilon,\nu}_i\|\leq \bar{\theta}_x$ and $\max_{i\in[N]}\|\mathbf{u}^{\epsilon,\nu}_i\|\leq \bar{\theta}_u$, where $\bar{\theta}_x$ and $\bar{\theta}_u$ are two positive constants, and let  $({x}^{\epsilon,\nu}_h,{u}^{\epsilon,\nu}_h)$ be defined in \eqref{feasible-interpolant}. Then, for sufficiently small $h$,
\[|\Phi^{\nu}(x^{\epsilon,\nu}_h,u^{\epsilon,\nu}_h)
-\Phi^{\nu}_h(x^{\epsilon,\nu}_h,u^{\epsilon,\nu}_h)|\leq C_Th,\]
where $C_T$ is a positive constant  independent of $h$.
\end{lemma}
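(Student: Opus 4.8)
The plan is to observe first that the two functionals share the same terminal cost. By \eqref{feasible-interpolant} we have $x_h^{\epsilon,\nu}(T)=\mathbf{x}_N^{\epsilon,\nu}$, so the terms $\frac{1}{\nu}\sum_{\ell=1}^\nu F(\cdot,\xi_\ell)$ in $\Phi^{\nu}$ and in $\Phi^{\nu}_h$ cancel exactly when both are evaluated at $(x_h^{\epsilon,\nu},u_h^{\epsilon,\nu})$. Hence it suffices to bound
\[
\Delta:=\tfrac12\Bigl|\,\|x_h^{\epsilon,\nu}-x_d\|_{L^2}^2-h\sum_{i=1}^N\|\mathbf{x}_i^{\epsilon,\nu}-x_{d,i}\|^2\,\Bigr|
+\tfrac{\delta}{2}\Bigl|\,\|u_h^{\epsilon,\nu}-u_d\|_{L^2}^2-h\sum_{i=1}^N\|\mathbf{u}_i^{\epsilon,\nu}-u_{d,i}\|^2\,\Bigr|,
\]
and to show $\Delta\le C_Th$ with $C_T$ depending only on $T$, $\bar{\theta}_x$, $\bar{\theta}_u$, $\|A\|$, $\|B\|$, $x_d$ and $u_d$.

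For the control term I would use that $u_h^{\epsilon,\nu}\equiv\mathbf{u}_{i+1}^{\epsilon,\nu}$ on $(t_i,t_{i+1}]$ to write $h\sum_{i=1}^N\|\mathbf{u}_i^{\epsilon,\nu}-u_{d,i}\|^2=\sum_{i=0}^{N-1}\int_{t_i}^{t_{i+1}}\|\mathbf{u}_{i+1}^{\epsilon,\nu}-u_d(t_{i+1})\|^2\,dt$ and $\|u_h^{\epsilon,\nu}-u_d\|_{L^2}^2=\sum_{i=0}^{N-1}\int_{t_i}^{t_{i+1}}\|\mathbf{u}_{i+1}^{\epsilon,\nu}-u_d(t)\|^2\,dt$, and then subtract termwise. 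Applying the elementary inequality $|\,\|a\|^2-\|b\|^2\,|\le\|a-b\|(\|a\|+\|b\|)$ with $a-b=u_d(t_{i+1})-u_d(t)$, together with the bound $\|u_d(t)-u_d(t_{i+1})\|\le h$ for a.e. $t\in(t_i,t_{i+1}]$ and small $h$ (argued exactly as for $f$ in the proof of Lemma~\ref{error-discrete-continuous}) and the uniform bound $\|\mathbf{u}_{i+1}^{\epsilon,\nu}\|\le\bar{\theta}_u$, each integrand is at most $h$ times a bounded quantity; integrating over the $N$ subintervals of length $h$ gives an $O(h)$ bound.

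The state term is where the piecewise-linear structure of $x_h^{\epsilon,\nu}$ enters. On $(t_i,t_{i+1}]$ one has $\|x_h^{\epsilon,\nu}(t)-\mathbf{x}_{i+1}^{\epsilon,\nu}\|\le\|\mathbf{x}_{i+1}^{\epsilon,\nu}-\mathbf{x}_i^{\epsilon,\nu}\|$, and the discrete dynamics $\mathbf{x}_{i+1}^{\epsilon,\nu}-\mathbf{x}_i^{\epsilon,\nu}=hA\mathbf{x}_{i+1}^{\epsilon,\nu}+hB\mathbf{u}_{i+1}^{\epsilon,\nu}$ combined with the boundedness hypotheses give $\|\mathbf{x}_{i+1}^{\epsilon,\nu}-\mathbf{x}_i^{\epsilon,\nu}\|\le h(\|A\|\bar{\theta}_x+\|B\|\bar{\theta}_u)$. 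Together with $\|x_d(t)-x_{d,i+1}\|\le h$ a.e. on the subinterval, the triangle inequality yields $\bigl|\,\|x_h^{\epsilon,\nu}(t)-x_d(t)\|-\|\mathbf{x}_{i+1}^{\epsilon,\nu}-x_{d,i+1}\|\,\bigr|\le h\bigl(1+\|A\|\bar{\theta}_x+\|B\|\bar{\theta}_u\bigr)$ pointwise. Squaring via the same $|\,\|a\|^2-\|b\|^2\,|$ estimate, using that $\|\mathbf{x}_{i+1}^{\epsilon,\nu}-x_{d,i+1}\|$ is bounded uniformly in $i$, and integrating over each $(t_i,t_{i+1}]$ and summing the $N=T/h$ terms, the state difference is again $O(h)$. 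Adding the two estimates produces the desired bound with an explicit constant $C_T$.

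I expect the genuinely non-routine point to be using the discrete difference equation to control the oscillation of the piecewise-linear interpolant inside each subinterval; everything else is careful bookkeeping of constants, with the pointwise step-function approximations $\|x_d(t)-x_{d,i+1}\|\le h$ and $\|u_d(t)-u_d(t_{i+1})\|\le h$ for small $h$ handled precisely as for $f$ in Lemma~\ref{error-discrete-continuous}.
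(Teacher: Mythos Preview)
Your proposal is correct and follows essentially the same approach as the paper's own proof: the paper likewise observes that the terminal term cancels, writes the difference as $W_1+W_2$ (state and control parts), uses the elementary identity $|\,\|a\|^2-\|b\|^2\,|\le\|a-b\|(\|a-b\|+2\|b\|)$, bounds $\|x_h^{\epsilon,\nu}(t)-\mathbf{x}_{i+1}^{\epsilon,\nu}\|$ via the discrete dynamics exactly as you do, and invokes the same pointwise approximations $\|x_d(t)-x_{d,i+1}\|\le h$ and $\|u_d(t)-u_{d,i+1}\|\le h$ for small $h$. Your identification of the discrete difference equation as the key non-routine input matches precisely what the paper uses.
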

\begin{proof}
Since $\{\mathbf{x}^{\epsilon,\nu}_i,\mathbf{u}^{\epsilon,\nu}_i\}_{i=1}^N$ is a bounded feasible solution of \eqref{OCTCCS-sample-time}, $\Phi^{\nu}_h(x^{\epsilon,\nu}_h,u^{\epsilon,\nu}_h)$ is bounded, which means that there is a $\theta_o>0$ such that $\max_{i\in[N]}\{\|\mathbf{x}^{\epsilon,\nu}_i-x_{d,i}\|, \|\mathbf{u}_i^{\epsilon,\nu}-u_{d,i}\|\}\leq \theta_o$. Therefore, we have $\Phi^\nu(x^{\epsilon,\nu}_h,u^{\epsilon,\nu}_h)-\Phi^{\nu}_h(x^{\epsilon,\nu}_h,u^{\epsilon,\nu}_h)= W_1+W_2$,
where
\begin{equation*}
\begin{aligned}
W_1&=\frac{1}{2}\sum_{i=0}^{N-1} \int_{t_i}^{t_{i+1}}\left(\|x^{\epsilon,\nu}_h(t)-x_d(t)\|^2-\|\mathbf{x}^{\epsilon,\nu}_{i+1}-x_d(t_{i+1})\|^2 \right)dt,&\\
W_2&=\frac{\delta}{2}\sum_{i=0}^{N-1} \int_{t_i}^{t_{i+1}}\left(\|u^{\epsilon,\nu}_h(t)-u_d(t)\|^2-\|\mathbf{u}^{\epsilon,\nu}_{i+1}-u_d(t_{i+1})\|^2 \right)dt.
\end{aligned}
\end{equation*}
Note that $x_d\in L^2(0,T)^n$ implies that there is $h_x>0$ such that $\|x_d(t_{i+1})-x_d(t)\|\leq h$ for a.e. $t\in(t_i,t_{i+1}]$ with $h\in(0,h_x]$. Then we have
\begin{eqnarray*}
& &|W_1|\leq \frac{1}{2}\sum_{i=0}^{N-1} \int_{t_i}^{t_{i+1}}(\|x^{\epsilon,\nu}_h(t)-\mathbf{x}^{\epsilon,\nu}_{i+1}\|\\
& &+\|x_d(t_{i+1})-x_d(t)\|)\left(\|x^{\epsilon,\nu}_h(t)-\mathbf{x}^{\epsilon,\nu}_{i+1}\|
+\|x_d(t_{i+1})-x_d(t)\|+2\|\mathbf{x}^{\epsilon,\nu}_{i+1}-x_d(t_{i+1})\| \right)dt\\
& &\leq \frac{1}{2}\sum_{i=0}^{N-1} (\|A\|\bar{\theta}_x+\|B\|\bar{\theta}_u+1)((\|A\|\bar{\theta}_x+\|B\|\bar{\theta}_u+1)h+2\theta_o)h^2\\
& &\leq  \frac{1}{2}(\|A\|\bar{\theta}_x+\|B\|\bar{\theta}_u+1)((\|A\|\bar{\theta}_x+\|B\|\bar{\theta}_u+1)h_x+2\theta_o)Th.
\end{eqnarray*}
Moreover, $u_d\in L^2(0,T)^m$ implies that there is $h_u>0$ such that $\|u_d(t_{i+1})-u_d(t)\|\leq h$ for a.e. $t\in(t_i,t_{i+1}]$ with $h\in(0,h_u]$. Then
\begin{eqnarray*}
|W_2|&\leq& \frac{\delta}{2}\sum_{i=0}^{N-1} \int_{t_i}^{t_{i+1}}\|u_d(t_{i+1})-u_d(t)\| \left(\|u_d(t_{i+1})-u_d(t)\|+2\|\mathbf{u}^{\epsilon,\nu}_{i+1}-u_d(t_{i+1})\| \right)dt\\
&\leq & \frac{\delta}{2} (h_u+2\theta_o)Th.
\end{eqnarray*}
It then derives our result for $h\in(0,\min\{h_x,h_u\}]$ by denoting \[C_T=\frac{1}{2}(\|A\|\bar{\theta}_x+\|B\|\bar{\theta}_u+1)
((\|A\|\bar{\theta}_x+\|B\|\bar{\theta}_u+1)h_x+2\theta_o)T+\frac{\delta}{2} (h_u+2\theta_o)T.\]
\end{proof}

\begin{theorem}\label{error-optimal-values}
Suppose that the conditions of Theorem \ref{feasible-set} hold. Let $(\hat{x}^{\epsilon,\nu},\hat{u}^{\epsilon,\nu})$ be an optimal solution of \eqref{OCTCCS-sample}, and let $(\hat{x}^{\epsilon,\nu}_h,\hat{u}^{\epsilon,\nu}_h)$ be defined in \eqref{feasible-interpolant} associated with an optimal solution
$\{\hat{\mathbf{x}}^{\epsilon,\nu}_i,\hat{\mathbf{u}}^{\epsilon,\nu}_i\}_{i=1}^N$ of \eqref{OCTCCS-sample-time}. Then, for sufficiently small $h$,
\begin{equation}\label{error1}
|\Phi^{\nu}_h(\hat{x}^{\epsilon,\nu}_h,\hat{u}^{\epsilon,\nu}_h)
-\Phi^{\nu}(\hat{x}^{\epsilon,\nu},\hat{u}^{\epsilon,\nu})|\leq \hat{C}_Th,
\end{equation}
where $C_T$ is a positive constant  independent of $h$.
\end{theorem}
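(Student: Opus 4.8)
The plan is to prove \eqref{error1} as two one-sided estimates, one bounding $\Phi^{\nu}_h(\hat{x}^{\epsilon,\nu}_h,\hat{u}^{\epsilon,\nu}_h)-\Phi^{\nu}(\hat{x}^{\epsilon,\nu},\hat{u}^{\epsilon,\nu})$ from above and one bounding the reverse difference, each obtained by chaining one of the comparison Lemmas~\ref{error-discrete-continuous} and \ref{error-continuous-discrete} with the value-gap bound of Lemma~\ref{error-values} and the local Lipschitz continuity of $\Phi^\nu$.

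First I would fix the auxiliary ingredients. By Assumption~\ref{assumption3} every $F(\cdot,\xi_\ell)$ is globally Lipschitz, so $z\mapsto\frac1\nu\sum_{\ell=1}^\nu F(z,\xi_\ell)$ is Lipschitz with constant $\frac1\nu\sum_{\ell=1}^\nu\kappa_1(\xi_\ell)<\infty$ w.p.1; since $F$ enters $\Phi^\nu$ only through $x(T)$, whose norm is controlled by $\|x\|_{H^1}$ via the embedding $H^1(0,T)^n\hookrightarrow C([0,T])^n$, while the quadratic tracking terms are Lipschitz on bounded sets through $|\,\|a\|^2-\|b\|^2\,|\le\|a-b\|(\|a\|+\|b\|)$, there is a constant $L$ with $|\Phi^\nu(x,u)-\Phi^\nu(x',u')|\le L(\|x-x'\|_{H^1}+\|u-u'\|_{L^2})$ on any prescribed bounded set. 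I would then fix an $h$-independent compact subset of $H^1(0,T)^n\times L^2(0,T)^m$ containing the optimal solution $(\hat{x}^{\epsilon,\nu},\hat{u}^{\epsilon,\nu})$ of \eqref{OCTCCS-sample}, every interpolant $(\hat{x}^{\epsilon,\nu}_h,\hat{u}^{\epsilon,\nu}_h)$ of an optimal solution of \eqref{OCTCCS-sample-time}, and the auxiliary feasible solutions produced by Lemmas~\ref{error-discrete-continuous} and \ref{error-continuous-discrete}; this uses the uniform boundedness of the relaxed terminal set $\mathcal{Z}^{\epsilon,\nu}$ (from the proof of Theorem~\ref{lemma-set-epi}), the continuous dependence of the solution map of \eqref{ODE-inequality} on its terminal value, and coercivity of $\Phi^\nu$ in $(x,u)$. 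Restricted to this set the constants $C_y,C_u,\tilde{C}_y,\tilde{C}_u,C_T$ and $L$ are all well defined and independent of $h$.

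For the bound $\Phi^{\nu}_h(\hat{x}^{\epsilon,\nu}_h,\hat{u}^{\epsilon,\nu}_h)-\Phi^{\nu}(\hat{x}^{\epsilon,\nu},\hat{u}^{\epsilon,\nu})\le\hat{C}_Th$, I would apply Lemma~\ref{error-continuous-discrete} to $(\hat{x}^{\epsilon,\nu},\hat{u}^{\epsilon,\nu})$ to obtain a feasible $\{\mathbf{x}^{\epsilon,\nu}_i,\mathbf{u}^{\epsilon,\nu}_i\}_{i=1}^N$ of \eqref{OCTCCS-sample-time} whose interpolant $(x^{\epsilon,\nu}_h,u^{\epsilon,\nu}_h)$ satisfies $\|x^{\epsilon,\nu}_h-\hat{x}^{\epsilon,\nu}\|_{H^1}\le\sqrt{1+T}\tilde{C}_yh$ and $\|u^{\epsilon,\nu}_h-\hat{u}^{\epsilon,\nu}\|_{L^2}\le\tilde{C}_uh$; then optimality of $(\hat{x}^{\epsilon,\nu}_h,\hat{u}^{\epsilon,\nu}_h)$ for \eqref{OCTCCS-sample-time} gives $\Phi^{\nu}_h(\hat{x}^{\epsilon,\nu}_h,\hat{u}^{\epsilon,\nu}_h)\le\Phi^{\nu}_h(x^{\epsilon,\nu}_h,u^{\epsilon,\nu}_h)$, Lemma~\ref{error-values} gives $\Phi^{\nu}_h(x^{\epsilon,\nu}_h,u^{\epsilon,\nu}_h)\le\Phi^{\nu}(x^{\epsilon,\nu}_h,u^{\epsilon,\nu}_h)+C_Th$, and the Lipschitz estimate gives $\Phi^{\nu}(x^{\epsilon,\nu}_h,u^{\epsilon,\nu}_h)\le\Phi^{\nu}(\hat{x}^{\epsilon,\nu},\hat{u}^{\epsilon,\nu})+L(\sqrt{1+T}\tilde{C}_y+\tilde{C}_u)h$, whose sum is the claim with constant $C_T+L(\sqrt{1+T}\tilde{C}_y+\tilde{C}_u)$. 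For the reverse bound $\Phi^{\nu}(\hat{x}^{\epsilon,\nu},\hat{u}^{\epsilon,\nu})-\Phi^{\nu}_h(\hat{x}^{\epsilon,\nu}_h,\hat{u}^{\epsilon,\nu}_h)\le\hat{C}_Th$, I would apply Lemma~\ref{error-discrete-continuous} to the optimal discrete solution $\{\hat{\mathbf{x}}^{\epsilon,\nu}_i,\hat{\mathbf{u}}^{\epsilon,\nu}_i\}_{i=1}^N$ (with interpolant $(\hat{x}^{\epsilon,\nu}_h,\hat{u}^{\epsilon,\nu}_h)$) to obtain a feasible $(x^{\epsilon,\nu},u^{\epsilon,\nu})$ of \eqref{OCTCCS-sample} with $\|x^{\epsilon,\nu}-\hat{x}^{\epsilon,\nu}_h\|_{H^1}\le\sqrt{1+T}C_yh$ and $\|u^{\epsilon,\nu}-\hat{u}^{\epsilon,\nu}_h\|_{L^2}\le C_uh$, and combine optimality of $(\hat{x}^{\epsilon,\nu},\hat{u}^{\epsilon,\nu})$ for \eqref{OCTCCS-sample}, i.e. $\Phi^{\nu}(\hat{x}^{\epsilon,\nu},\hat{u}^{\epsilon,\nu})\le\Phi^{\nu}(x^{\epsilon,\nu},u^{\epsilon,\nu})$, the Lipschitz estimate $\Phi^{\nu}(x^{\epsilon,\nu},u^{\epsilon,\nu})\le\Phi^{\nu}(\hat{x}^{\epsilon,\nu}_h,\hat{u}^{\epsilon,\nu}_h)+L(\sqrt{1+T}C_y+C_u)h$, and Lemma~\ref{error-values} in the form $\Phi^{\nu}(\hat{x}^{\epsilon,\nu}_h,\hat{u}^{\epsilon,\nu}_h)\le\Phi^{\nu}_h(\hat{x}^{\epsilon,\nu}_h,\hat{u}^{\epsilon,\nu}_h)+C_Th$. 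Taking $\hat{C}_T:=C_T+L\max\{\sqrt{1+T}\tilde{C}_y+\tilde{C}_u,\;\sqrt{1+T}C_y+C_u\}$ then gives \eqref{error1}.

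The step I expect to be the main obstacle is making all these constants genuinely independent of $h$: the constants in Lemmas~\ref{error-discrete-continuous}--\ref{error-values} are built from $L^\infty$-bounds on the states and controls, so the argument hinges on exhibiting one bounded set, independent of $h$, that simultaneously captures the optimizers of \eqref{OCTCCS-sample} and \eqref{OCTCCS-sample-time} together with the auxiliary feasible pairs, and on checking that the Lipschitz modulus $L$ on that set is also $h$-free. A secondary subtlety is that Lemma~\ref{error-continuous-discrete} is stated for an essentially bounded feasible pair: $\hat{x}^{\epsilon,\nu}\in H^1(0,T)^n\subset C([0,T])^n$ is automatically bounded, and its control is handled via the representation $\hat{u}^{\epsilon,\nu}=Y\hat{v}+D^\dag(\tilde{p}+f-C\hat{x}^{\epsilon,\nu})$ used in the lemmas, replacing $\hat{v}$ by a bounded $L^2$-near-minimizer if necessary, which perturbs $\Phi^\nu$ by $O(h)$ and preserves feasibility because the terminal constraints of \eqref{OCTCCS-sample} are $\epsilon$-relaxed.
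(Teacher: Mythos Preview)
Your proposal is correct and follows essentially the same two-sided chaining as the paper's proof: apply Lemma~\ref{error-continuous-discrete} to $(\hat{x}^{\epsilon,\nu},\hat{u}^{\epsilon,\nu})$ and Lemma~\ref{error-discrete-continuous} to $\{\hat{\mathbf{x}}^{\epsilon,\nu}_i,\hat{\mathbf{u}}^{\epsilon,\nu}_i\}$, then use optimality and Lemma~\ref{error-values}. One simplification you overlooked is that the auxiliary feasible pairs built in Lemmas~\ref{error-discrete-continuous}--\ref{error-continuous-discrete} share the \emph{same terminal state} as the solutions they approximate (the lemma constructions fix $x^{\epsilon,\nu}(T)=\mathbf{x}^{\epsilon,\nu}_N$ and $\mathbf{x}^{\epsilon,\nu}_N=\hat{x}^{\epsilon,\nu}(T)$ respectively), so the $F$-contribution to $\Phi^\nu$ cancels and the paper bounds $|\Phi^\nu(\cdot)-\Phi^\nu(\cdot)|$ directly via the quadratic identity $|\,\|a\|^2-\|b\|^2\,|\le\|a-b\|(\|a\|+\|b\|)$ on the $L^2$ tracking terms alone, avoiding your detour through the $H^1\hookrightarrow C$ embedding and the Lipschitz constant of $F$.
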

\begin{proof}
Since $\{\hat{\mathbf{x}}^{\epsilon,\nu}_i,\hat{\mathbf{u}}^{\epsilon,\nu}_i\}_{i=1}^N$ is an optimal solution of \eqref{OCTCCS-sample-time}, there is $\psi_0$ such that $\max\{\|\hat{x}^{\epsilon,\nu}_h-x_d\|_{L^2},\|\hat{u}^{\epsilon,\nu}_h-u_d\|_{L^2}\}\leq \psi_0$, where $(\hat{x}_h^{\epsilon,\nu},\hat{u}_h^{\epsilon,\nu})$ is defined in \eqref{feasible-interpolant} associated with the sequence $\{\hat{\mathbf{x}}^{\epsilon,\nu}_i,\hat{\mathbf{u}}^{\epsilon,\nu}_i\}_{i=1}^N$. Similarly, $(\hat{x}^{\epsilon,\nu},\hat{u}^{\epsilon,\nu})$ is an optimal solution of \eqref{OCTCCS}, which means that there is $\psi_1$ such that $\max\{\|\hat{x}^{\epsilon,\nu}-x_d\|_{L^2},\|\hat{u}^{\epsilon,\nu}-u_d\|_{L^2}\}\leq \psi_1$.

Following Lemma \ref{error-discrete-continuous}-Lemma \ref{error-values}, there is $\bar{h}>0$ such that for any $h\in(0,\bar{h}]$  there is $(x^{\epsilon,\nu},u^{\epsilon,\nu})$, which is a feasible solution of \eqref{OCTCCS-sample} satisfying $\|x^{\epsilon,\nu}-\hat{x}^{\epsilon,\nu}_h\|_{L^2}\leq \sqrt{T} C_y h$ and $\|u^{\epsilon,\nu}-\hat{u}^{\epsilon,\nu}_h\|_{L^2}\leq C_uh.$
Moreover, according to Lemma \ref{error-continuous-discrete}, for any $h\in(0,\bar{h}]$ there is a $\{\mathbf{x}^{\epsilon,\nu}_i,\mathbf{u}^{\epsilon,\nu}_i\}_{i=1}^N$, which is a feasible solution of \eqref{OCTCCS-sample-time}, such that
$\|\hat{x}^{\epsilon,\nu}-{x}^{\epsilon,\nu}_h\|_{L^2}\leq \sqrt{T} \tilde{C}_y h$ and $\|\hat{u}^{\epsilon,\nu}-{u}^{\epsilon,\nu}_h\|_{L^2}\leq \tilde{C}_uh,$
where $(x^{\epsilon,\nu}_h,u^{\epsilon,\nu}_h)$ is defined in \eqref{feasible-interpolant} based on the sequence $\{\mathbf{x}^{\epsilon,\nu}_i,\mathbf{u}^{\epsilon,\nu}_i\}_{i=1}^N$.

Then we have $\Phi^{\nu}_h(\hat{x}^{\epsilon,\nu}_h,\hat{u}^{\epsilon,\nu}_h)\leq \Phi^\nu_h({x}^{\epsilon,\nu}_h,{u}^{\epsilon,\nu}_h)$, which means
\begin{eqnarray*}
& &\Phi^{\nu}_h(\hat{x}^{\epsilon,\nu}_h,\hat{u}^{\epsilon,\nu}_h)
-\Phi^{\nu}(\hat{x}^{\epsilon,\nu},\hat{u}^{\epsilon,\nu})\leq \Phi^{\nu}_h({x}^{\epsilon,\nu}_h,{u}^{\epsilon,\nu}_h)
-\Phi^{\nu}(\hat{x}^{\epsilon,\nu},\hat{u}^{\epsilon,\nu})\\
& &\leq |\Phi^{\nu}_h({x}^{\epsilon,\nu}_h,{u}^{\epsilon,\nu}_h)
-\Phi^{\nu}({x}^{\epsilon,\nu}_h,{u}^{\epsilon,\nu}_h)|
+|\Phi^{\nu}({x}^{\epsilon,\nu}_h,{u}^{\epsilon,\nu}_h)
-\Phi^{\nu}(\hat{x}^{\epsilon,\nu},\hat{u}^{\epsilon,\nu})|.
\end{eqnarray*}
Clearly,
\begin{eqnarray*}
|\Phi^{\nu}({x}^{\epsilon,\nu}_h,{u}^{\epsilon,\nu}_h)
-\Phi^{\nu}(\hat{x}^{\epsilon,\nu},\hat{u}^{\epsilon,\nu})|&\leq & \frac{1}{2}\|{x}^{\epsilon,\nu}_h-\hat{x}^{\epsilon,\nu}\|_{L^2}(\|{x}^{\epsilon,\nu}_h-\hat{x}^{\epsilon,\nu}\|_{L^2}
+2\|\hat{x}^{\epsilon,\nu}-x_d\|_{L^2})\\
& &+\frac{\delta}{2}\|{u}^{\epsilon,\nu}_h-\hat{u}^{\epsilon,\nu}\|_{L^2}
(\|{u}^{\epsilon,\nu}_h-\hat{u}^{\epsilon,\nu}\|_{L^2}+2\|\hat{u}^{\epsilon,\nu}-u_d\|_{L^2})\\
&\leq & \left(\frac{1}{2} \sqrt{T} \tilde{C}_y(\sqrt{T} \tilde{C}_y \bar{h}+2\psi_1)+\frac{\delta}{2}\tilde{C}_u(\tilde{C}_u\bar{h}+2\psi_1) \right) h.
\end{eqnarray*}
Hence, according to Lemma \ref{error-values}, we get
\begin{eqnarray*}
\Phi^{\nu}_h(\hat{x}^{\epsilon,\nu}_h,\hat{u}^{\epsilon,\nu}_h)
 -\Phi^{\nu}(\hat{x}^{\epsilon,\nu},\hat{u}^{\epsilon,\nu})&\leq& \left(\frac{1}{2} \sqrt{T} \tilde{C}_y(\sqrt{T} \tilde{C}_y \bar{h}+2\psi_1)+\frac{\delta}{2}\tilde{C}_u(\tilde{C}_u\bar{h}+2\psi_1)+C_T \right) h\\
 & =:&\beta_1h.
 \end{eqnarray*}

From $\Phi^{\nu}(\hat{x}^{\epsilon,\nu},\hat{u}^{\epsilon,\nu})\leq \Phi^{\nu}({x}^{\epsilon,\nu},{u}^{\epsilon,\nu})$, we have
\begin{eqnarray*}
& &\Phi^{\nu}(\hat{x}^{\epsilon,\nu},\hat{u}^{\epsilon,\nu})
-\Phi_h^{\nu}(\hat{x}^{\epsilon,\nu}_h,\hat{u}^{\epsilon,\nu}_h)\leq \Phi^{\nu}({x}^{\epsilon,\nu},{u}^{\epsilon,\nu})
-\Phi_h^{\nu}(\hat{x}^{\epsilon,\nu}_h,\hat{u}^{\epsilon,\nu}_h)\\
& &\leq |\Phi^{\nu}({x}^{\epsilon,\nu},{u}^{\epsilon,\nu})
-\Phi^{\nu}(\hat{x}^{\epsilon,\nu}_h,\hat{u}^{\epsilon,\nu}_h)|+ |\Phi^{\nu}(\hat{x}^{\epsilon,\nu}_h,\hat{u}^{\epsilon,\nu}_h)
-\Phi_h^{\nu}(\hat{x}^{\epsilon,\nu}_h,\hat{u}^{\epsilon,\nu}_h)|
\end{eqnarray*}
and
\begin{eqnarray*}
|\Phi^{\nu}({x}^{\epsilon,\nu},{u}^{\epsilon,\nu})
-\Phi^{\nu}(\hat{x}^{\epsilon,\nu}_h,\hat{u}^{\epsilon,\nu}_h)|&\leq & \frac{1}{2}\|x^{\epsilon,\nu}-\hat{x}^{\epsilon,\nu}_h\|_{L^2}(\|x^{\epsilon,\nu}-\hat{x}^{\epsilon,\nu}_h\|_{L^2}
+2\|\hat{x}^{\epsilon,\nu}_h-x_d\|_{L^2})\\
& &+\frac{\delta}{2}\|u^{\epsilon,\nu}-\hat{u}^{\epsilon,\nu}_h\|_{L^2}(\|u^{\epsilon,\nu}-\hat{u}^{\epsilon,\nu}_h\|_{L^2}
+2\|\hat{u}^{\epsilon,\nu}_h-u_d\|_{L^2})\\
&\leq & \left(\frac{1}{2} \sqrt{T} {C}_y(\sqrt{T} {C}_y \bar{h}+2\psi_0)+\frac{\delta}{2}{C}_u({C}_u\bar{h}+2\psi_0) \right) h.
\end{eqnarray*}
It means that
\begin{eqnarray*}
\Phi^{\nu}(\hat{x}^{\epsilon,\nu},\hat{u}^{\epsilon,\nu})
-\Phi^{\nu}_h(\hat{x}^{\epsilon,\nu}_h,\hat{u}^{\epsilon,\nu}_h)&\leq& \left(\frac{1}{2} \sqrt{T} {C}_y(\sqrt{T} {C}_y \bar{h}+2\psi_0)+\frac{\delta}{2}{C}_u({C}_u\bar{h}+2\psi_0)+C_T \right) h\\
&=:&\beta_2h.
\end{eqnarray*}
Hence (\ref{error1}) holds with
$\hat{C}_T= \max\{\beta_1,\beta_2\}.$
\end{proof}

\section{Numerical experiments}\label{Example}
We use the following numerical example to illustrate the theoretical results obtained in this paper.
  \begin{equation}\label{numerical-example}
\begin{aligned}
&\min_{x,u}\,\,(\mathbb{E}[\xi_1^2+\xi_2]+ 1)\|x(T)\|^2 +\frac{1}{2}\left(\|x\|_{L^2}^2
+\|u\|_{L^2}^2\right)&\\
&\textrm{s.t.}
\left\{
\begin{aligned}
&\left.
\begin{aligned}
&\dot{x}_1(t)=u_1(t), &\\
&\dot{x}_2(t)=x_2(t)-u_2(t),&\\
&\dot{x}_3(t)=u_3(t), &\\
&\dot{x}_4(t)=x_4(t)-u_4(t),&\\
&x_1(t)+u_2(t)\leq 0,&\\
&{x}_4(t)+u_3(t)\leq 0,&\\
\end{aligned}\right\}a.e.\,\,t\in(0,T),&\\
&x(0)=(1,1,1,1)^\top,\quad  0\leq          x(T)
         \perp \mathbb{E}[M(\xi)
                 x(T)
      + q(\xi)]
      \geq 0,&\\
&\left(x_1(T)+x_3(T), (\mathbb{E}[\xi_1]+1)(x_2(T)+x_4(T)) \right)^\top\in \mathcal{B}(0,\sqrt{6})\subset \mathbb{R}^2,
\end{aligned}
\right.
\end{aligned}
\end{equation}
where
  \[q(\xi)
       =\left(\begin{array}{c}
         3 +\xi_2 \\
         \xi_1\\
         1-\xi_2\\
         \xi_1+1\\
       \end{array}\right)\quad {\rm and} \quad
       M(\xi)=\left( \begin{array}{cccc}
  -2-\xi_1 & 0 & -\xi_2 &-\xi_1 \\
  0 & \xi_2 & -1 &0 \\
  0 & -\xi_1 & \xi_2 & 0 \\
 \xi_2 -1 & 0 & 0 &\xi_1 \\
  \end{array}\right). \]
We set $T=1$, and  $\xi_1\sim {\cal{N}}(1,0.01)$ and $\xi_2\sim
{\cal{U}}(-1,1)$.  It is easy to verify that $\mathbb{E}[M(\xi)]$ is a Z-matrix and the controllability matrix in Assumption \ref{assumption-Y}
\[\mathcal{R}=\left( \begin {array}{cccccccc} 0&1&0&0&0&0&0&0\\
0&0&0&1&0&1&0&1\\
0&0&1&0&1&0&1&0
\\
-1&0&-1&0&-1&0&-1&0\end {array} \right),
\]
is full row rank.
We can derive that the solution set of the LCP in \eqref{numerical-example} is
$$\{(0,0,0,0)^\top,(1,0,0,0)^\top,(0,1,1,0)^\top,(1,1,1,0)^\top\}$$
and the solution set of the terminal constraints in \eqref{numerical-example} is $$\{(0,0,0,0)^\top,(1,0,0,0)^\top,(0,1,1,0)^\top\}.$$

By computation using Maple for these three terminal vectors, we obtain that the values of the objective function in \eqref{numerical-example} with $(0,0,0,0)^\top$, $(1,0,0,0)^\top$ and $(0,1,1,0)^\top$ are $35.29712213$, $48.71659453$ and $25.17501124$, respectively, which means that the optimal solution of \eqref{numerical-example} has a unique terminal vector $x(T)=(0,1,1,0)^\top$.
With the terminal vector   $(0,1,1,0)^\top$, we derive  an optimal solution
 \begin{equation*}
 \begin{aligned}
 x_1^*(t)=&(-40.3067\sin(at)+0.3685\cos(at))e^{-ct}
+(1.3063\sin(at)+0.6315\cos(at))e^{ct}\\
x_2^*(t)=&(17.379\sin(at)+2.4445\cos(at))e^{-ct}
+(3.0042\sin(at)-1.4445\cos(at))e^{ct}\\
x_3^*(t)=&2.0488e^{-1.618t}+1.8734e^{1.618t}-0.2901e^{0.61805t}-2.6321e^{-0.61805t}\\
x_4^*(t)=&3.315e^{-1.618t}+0.46938e^{1.618t}-1.1578e^{0.61805t}-1.6266e^{-0.61805t}\\
u_1^*(t)=&(51.113\sin(at)-14.198\cos(at))e^{-ct}
+(1.4471\sin(at)+1.2488\cos(at))e^{ct}\\
u_2^*(t)=& (40.3067\sin(at)-0.3685\cos(at))e^{-ct}
-(1.3063\sin(at)+0.6315\cos(at))e^{ct}\\
{u}^*_3(t)=&-3.315e^{-1.618t}-0.46938e^{1.618t}+1.1578e^{0.61805t}+1.6266e^{-0.61805t}\\
{u}^*_4(t)=&8.6789e^{-1.618t}-0.2901e^{1.618t}-0.4423e^{0.61805t}-2.6321e^{-0.61805t}\\
\end{aligned}
\end{equation*}
of problem \eqref{numerical-example}, where $a=0.34066$ and $ c=1.2712$.

It is easy to verify that Assumption 1.1,  Assumption 3.4 and Assumption 3.5 hold for the functions $g(x(T),\xi)=(x_1(T)+x_3(T), (\xi_1+1)(x_2(T)+x_4(T)))^\top$ and
$F(x(T),\xi)= (\xi_1^2 +\xi_2+1)\|x(T)\|^2, $
and random matrix $M(\xi)$ and vector $q(\xi)$.  Moreover, conditions of Theorem 2.1 hold, since ${\bf 0}\in {\cal V}$, $\mathbb{E}[M(\xi)]$ is a Z-matrix, and $K=\mathcal{B}(0,\sqrt{6})\subset \mathbb{R}^2.$

We apply the relaxation,  the SAA scheme and  the time-stepping method to problem \eqref{numerical-example}. We use Matlab built solver \emph{fmincon} to solve the discrete approximation problems of problem \eqref{numerical-example}. Setting  $\epsilon=0.00001$, for each pair $(\nu,h)$ with
$$\nu\in \{500, 1000, 2000, 3000, 4000\},  \quad h\in \{0.008, 0.005, 0.004, 0.002, 0.001 \},$$ we generate
i.i.d. samples $\Xi^{\nu,k}=\{\xi^k_1, \ldots, \xi^k_\nu\}, k=1,\ldots, 10000$.
We solve the discrete problem to find a solution
$(x^{\epsilon,\nu}_{h,k}, u^{\epsilon,\nu}_{h,k})$
 using each of the samples $\Xi^{\nu,k}$, $k=1,\ldots, 10000$.
Then we compute the optimal value of the discrete problem for each $k$
$$\Phi^{\nu,k}_h(x^{\epsilon,\nu}_{h,k},u^{\epsilon,\nu}_{h,k})=
\frac{1}{\nu}\sum^\nu_{i=1}F(x^{\epsilon,\nu}_{h,k}(T),\xi^k_i)-\frac{1}{2}(\|x^{\epsilon,\nu}_{h,k}\|^2_{L^2}
+\|u^{\epsilon,\nu}_{h,k}\|^2_{L^2}).$$
The errors between $\Phi(x^*,u^*)=25.17501124$ and the optimal value $\Phi^\nu_h(x^{\epsilon,\nu}_h,u^{\epsilon,\nu}_h)$ are estimated by
\[E_h^{\epsilon,\nu}=\frac{1}{10000}\sum^{10000}_{k=1}
(\Phi(x^*,u^*)-
\Phi^{\nu,k}_h(x^{\epsilon,\nu}_{h,k},u^{\epsilon,\nu}_{h,k}))^2.
\]

   The numerical results are shown in FIG. 1, which verify the convergence results in Sections 3-4.
\begin{figure}[htbp]
  \centerline{
\subfigure[]{\psfig{figure=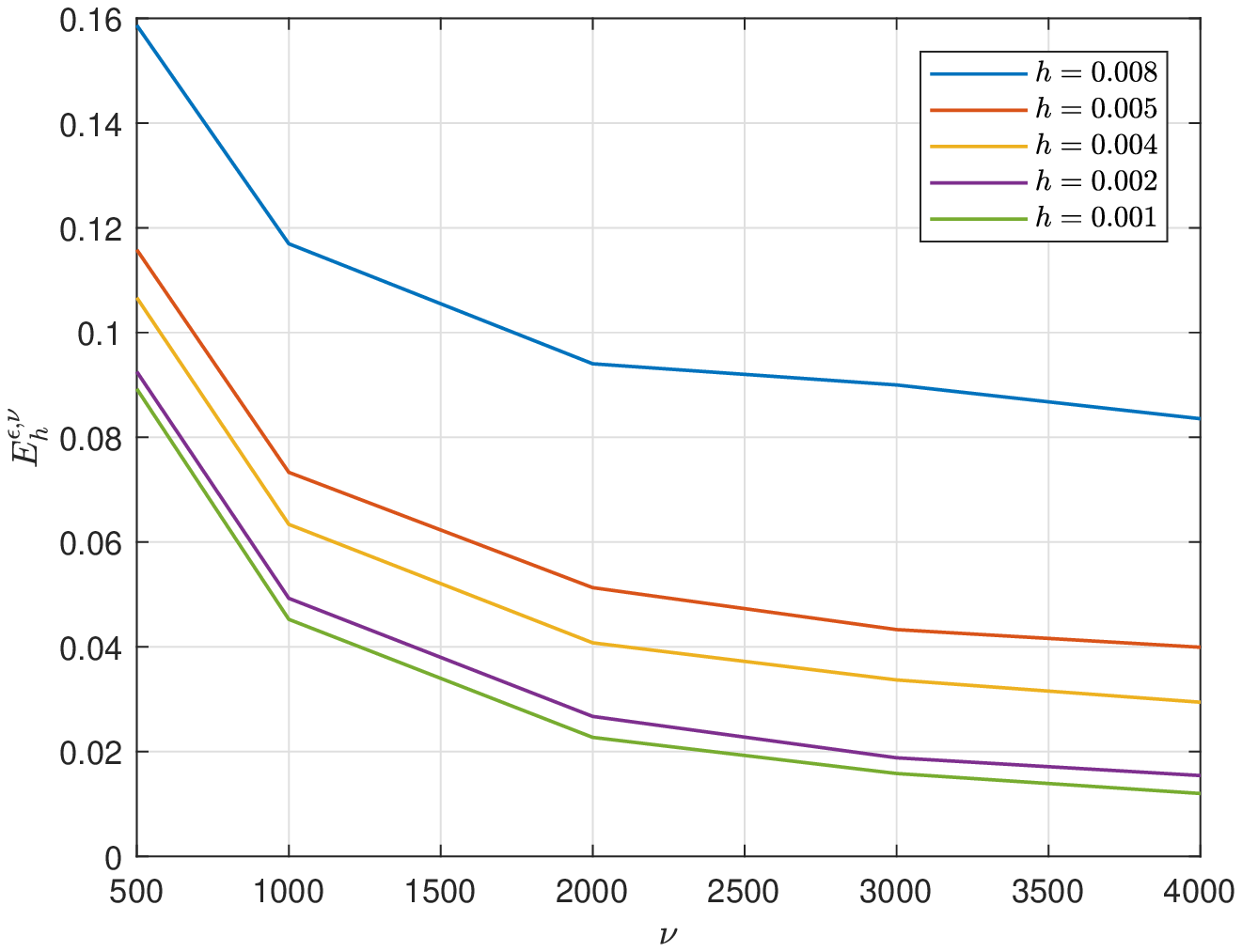,width=0.5\textwidth}}
\subfigure[]{\psfig{figure=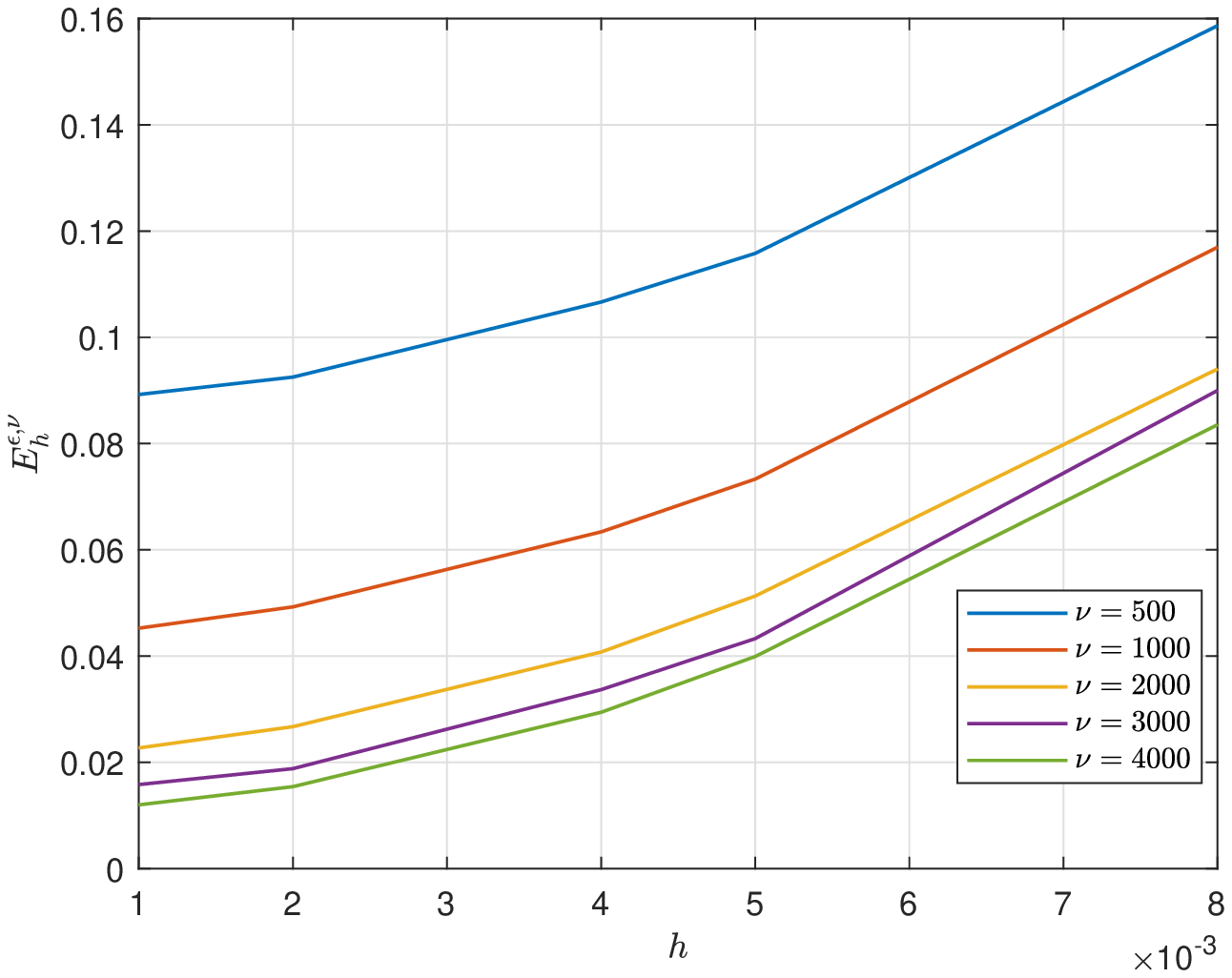,width=0.5\textwidth}}
}\caption{ Numerical errors between optimal values of (\ref{numerical-example}) and its discrete problems}
\end{figure}\label{figure}

\section{Conclusions}\label{se:conclusions}

In this paper, we study the optimal control problem with terminal stochastic linear complementarity constraints \eqref{OCTCCS}, and its relaxation-SAA problem \eqref{OCTCCS-sample} and the
relaxation-SAA-time stepping approximation problem \eqref{OCTCCS-sample-time}.
We prove the existence of feasible solutions and optimal solutions to problem \eqref{OCTCCS} in Theorem \ref{feasible-set} under the assumption
$E[M(\xi)]$ is a Z-matrix or an adequate matrix. Under the same assumptions of Theorem \ref{feasible-set}, we prove the existence of feasible solutions and optimal solutions to \eqref{OCTCCS-sample} and \eqref{OCTCCS-sample-time}.  We also show the convergent properties of these two discrete problems \eqref{OCTCCS-sample} and \eqref{OCTCCS-sample-time} as the relaxation parameter $\epsilon\downarrow0$, the sample size $\nu \rightarrow \infty$ and mesh size $h \downarrow0$.
Moreover, we provide asymptotics of the SAA optimal value and the error bound of the time-stepping method.
Problem \eqref{OCTCCS} extends optimal control problem with terminal deterministic linear complementarity constraints in \cite{FP2018}
to stochastic problems.  In \cite{FP2018}, Benita and Mehlita derived some stationary points and constraint qualifications under the assumption that the constrained LCP \eqref{LCP}-\eqref{LCPg} is solvable.  Theorem \ref{feasible-set} gives  sufficient conditions for the extension of solutions of \eqref{LCP}-\eqref{LCPg}.

\bibliographystyle{siamplain}
\bibliography{references}

\end{document}